\title{Frequency locking by external forcing in systems with rotational
symmetry}
\author{Lutz Recke\footnotemark[2]\ \footnotemark[4]
\and     Anatoly Samoilenko\footnotemark[3]\ \footnotemark[5]
\and     Viktor Tkachenko\footnotemark[3]\ \footnotemark[5]
\and     Serhiy Yanchuk\footnotemark[2]\ \footnotemark[6]}
\newtheorem{remark}{Remark}
\begin{document}

\maketitle
\newcommand{\slugmaster}{%
\slugger{MMedia}{xxxx}{xx}{x}{x--x}}

\renewcommand{\thefootnote}{\fnsymbol{footnote}}

\footnotetext[2]{Institute of Mathematics, Humboldt University of Berlin, Unter
den Linden 6, 10099 Berlin, Germany}
\footnotetext[3]{Institute of Mathematics, National Academy of Sciences of
Ukraine, Tereschenkivska St. 3, 01601 Kiev, Ukraine}
\footnotetext[4]{This work was partially supported by the DFG Research Centre MATHEON ''Mathematics for key technologies'' under the project D8}
\footnotetext[5]{This work was partially supported by the DFG cooperation project between Germany and Ukraine WO 891/3-1}
\footnotetext[6]{This work was partially supported by the DFG Research Centre MATHEON ''Mathematics for key technologies'' under the project D21.}

\renewcommand{\thefootnote}{\arabic{footnote}}



%

\begin{abstract}
We study locking of the modulation frequency of a relative periodic orbit
 in a general $S^{1}$-equivariant system of ordinary differential equations
under an external forcing of  modulated wave type.
Our main result describes the shape of the locking region in the three-dimensional
space of the forcing parameters: intensity, wave frequency, and modulation
frequency. The difference of the wave frequencies of the relative periodic
orbit and the forcing is assumed to be large
and differences of modulation frequencies to be small.
The intensity of the forcing is small in the generic case and can be large
in the degenerate case, when the first order averaging vanishes.
Applications are external electrical and/or optical forcing of
selfpulsating states of lasers.
\end{abstract}

\begin{keywords}
Frequency locking; rotational symmetry; relative periodic orbits;
external force; averaging; local coordinates
\end{keywords}

\begin{AMS}
34D10; 34C14; 34D06; 34D05; 34C29; 34C30
\end{AMS}

\pagestyle{myheadings}
\thispagestyle{plain}
\markboth{L. RECKE, A. SAMOILENKO, V. TKACHENKO, S. YANCHUK}{FREQUENCY LOCKING BY EXTERNAL FORCING
 IN SYSTEMS WITH ROTATIONAL
SYMMETRY}

\section{Introduction}
This paper concerns systems of ordinary differential equations
of the type
\begin{equation}
\frac{dx}{dt}=f(x)+\gamma g(x,\beta t,\alpha t).\label{01}
\end{equation}
Here $f:\mathbb{R}^{n}\to\mathbb{R}^{n}$ and $g:\mathbb{R}^{n}\times\mathbb{R}^{2}\to\mathbb{R}^{n}$
are $C^{l}$-smooth with $l>3$, and $\alpha>0,$ $\beta>0,$ $\gamma\ge0$
are parameters. The vector field $f$ is supposed to be $S^{1}$-equivariant
\begin{equation}
f(e^{A\xi}x)=e^{A\xi}f(x)\label{eq:symf}
\end{equation}
for all  $x\in\mathbb{R}^{n}$ and $\xi\in\mathbb{R}$,
where $A$ is a non-zero real $n\times n$-matrix such that $A^{T}=-A$
and $e^{2\pi A}=I$. The forcing term $g$ is supposed to be $2\pi$-periodic
with respect to the second and third arguments
$$
g(x,\psi+2\pi,\varphi)=g(x,\psi,\varphi)=g(x,\psi,\varphi+2\pi),
$$
and to possess the following symmetry
\begin{equation}
g(e^{A\xi}x,\psi,\varphi+\xi)=e^{A\xi}g(x,\psi,\varphi) \label{eq:symg}
\end{equation}
for all $x\in\mathbb{R}^{n}$ and $\varphi,\psi,\xi\in\mathbb{R}.$
Finally we assume that the unperturbed system
\begin{equation}
\frac{dx}{dt}=f(x) \label{02}
\end{equation}
has an exponentially orbitally stable quasiperiodic solution of the type
\begin{equation}
x(t)=e^{A\alpha_{0}t} x_{0}(\beta_{0}t).
\label{qp}
\end{equation}
Here $x_{0}: \mathbb{R}\to\mathbb{R}^{n}$ is $2\pi$-periodic, and $\alpha_{0}>0$,
 $\beta_{0}>0$ are the wave and modulation frequencies of the
solution (\ref{qp}).
The main property of quasiperiodic solutions of the type (\ref{qp}) is that
they are motions on a 2-torus without frequency locking. Those solutions
are sometimes called modulated waves \cite{Rand1982} or modulated rotating waves
\cite{Crawford1988} or relative periodic orbits \cite{Field2007}.
We assume that the following nondegeneracy condition holds:
\begin{equation}
\mbox{The vectors}\ Ax_{0}(\psi)\ \mbox{and} \ \frac{dx_{0}}{d\psi}(\psi)
\ \mbox{are linearly independent.}\label{eq:nondeg}
\end{equation}
It is easy to verify that (\ref{eq:nondeg}) is true for all $\psi\in\mathbb{R}$
if it holds for one $\psi$. Condition (\ref{eq:nondeg}) implies
that the set
\begin{equation}
\mathcal{T}_{2}:=\{(e^{A\varphi}x_{0}(\psi))\in\mathbb{R}^{n}:\quad\varphi,\psi\in\mathbb{R}\},\label{eq:torus}
\end{equation}
 which is invariant with respect to the flow corresponding to (\ref{02}),
is a two-dimensional torus.

Our main results describe open sets in the three-dimensional space
of the control parameters $\alpha$, $\beta$ and $\gamma$ with $|\alpha-\alpha_{0}|\gg1$
and $\beta\approx\beta_{0}$, where stable locking of the modulation
frequencies $\beta$ of the forcing and $\beta_{0}$ of the
 solution (\ref{qp}) occurs, i.e. where the following holds:
For almost any solution $x(t)$ to (\ref{01}), which is at a certain
moment close to $\mathcal{T}_{2}$, there exists $\sigma\in\mathbb{R}$
such that
\begin{equation}
\inf_{\psi}\|x(t)-e^{A\psi}x_{0}(\beta t+\sigma)\|\approx0\mbox{ for large }t.\label{eq:cond}
\end{equation}

Our results essentially differ in the so-called non-degenerate and
degenerate cases (see Section 2). The degenerate case includes all
cases when the averaged (with respect to the fast wave oscillations)
forcing
\begin{equation}
g_{1}(x,\psi):=\frac{1}{2\pi}\int_{0}^{2\pi}g(x,\psi,\varphi)d\varphi\label{av}
\end{equation}
vanishes identically, for example, if $g$ is of the type
$g(x,\psi,\varphi)=e^{A\varphi}\bar{g}(\psi)$.
Roughly speaking, under additional generic conditions the following
is true:

{\it Non-degenerate case:} If $\alpha$ is sufficiently large and
$\gamma$ is sufficiently small and $\beta-\beta_{0}$ is of order
$\gamma$, then locking occurs.

{\it Degenerate case:} If $\alpha$ is sufficiently large and $\gamma^{2}/\alpha$
is sufficiently small and $\beta-\beta_{0}$ is of order $\gamma^{2}/\alpha$,
then locking occurs.


The present paper extends previous work on this topic for particular
types of the vector field $f$ and the forces $g$ \cite{Schneider2005,Samoilenko2005,Recke2011}.
In particular, the considered special cases in \cite{Schneider2005,Samoilenko2005,Recke2011}
are doubly degenerate in the sense that not only the averaged forcing
(\ref{av}) vanishes but also the second averaging term turns to zero.
Remark that in \cite{Recke1998} related results are described for
the case that both differences of modulation and wave frequencies
are small, and \cite{Recke1998a} concerns the case when the internal
state as well as the external forcing are not modulated. For an even
more abstract setting of these results see \cite{Chillingworth2000}.

Systems of the type (\ref{01}) appear as models for the dynamical
behavior of self-pulsating lasers under the influence of external
periodically modulated optical and/or electrical signals, see, e.g.
\cite{Radziunas2006,Bandelow1998,Lichtner2007,Nizette2001,Peterhof1999,Sieber2002,Wieczorek2005},
and for related experimental results see \cite{Feiste1994,Sartorius1998}.
In (\ref{01}) the state variable $x$ describes the electron density
and the optical field of the laser. In particular, the Euclidian norm
$\|x\|$ describes the sum of the electron density and the intensity
of the optical field. The $S^{1}$-equivariance of (\ref{02}) is
the result of the invariance of autonomous optical models with respect
to shifts of optical phases. The solution (\ref{qp}) describes a
so-called self-pulsating state of the laser in the case that the laser
is driven by electric currents which are constant in time. In those
states the electron density and the intensity of the optical field
are time periodic with the same frequency. Self-pulsating states usually
appear as a result of Hopf bifurcations from so-called continuous
wave states, where the electron density and the intensity of the optical
field are constant in time.

External forces of the type $\gamma e^{i\alpha t}\bar{g}(\beta t)$
appear for describing an external optical injection with optical frequency
$\alpha$ and modulation frequency $\beta$. In spatially extended laser models those forces
appear as inhomogeneities in the boundary conditions. After homogenization
of those boundary conditions and finite dimensional mode approximations
(or Galerkin schemes) one ends up with systems of type (\ref{01})
with general forces of the type $\gamma g(x,\beta t,\alpha t)$
with (\ref{eq:symg}). External forces of the type
\[
g(x,\beta t,\alpha t)=g_1(x,\beta t)\mbox{ with }g_1(e^{A\xi}x,\psi)=e^{A\xi}
g_1(x,\psi)
\]
appear for describing an external electrical injection with modulation
frequency $\beta$.\\

The further structure of our paper is as follows. In Section \ref{sec:Main-results},
we present and discuss the main results. In Section \ref{sec:Averaging},
the averaging transformation is used to simplify system (\ref{01}).
The necessary properties of the unperturbed system (\ref{02}) are
discussed in Section \ref{sec:Unperturbed-system}. In Section \ref{sec:Local-coordinates},
we introduce local coordinates in the vicinity of the unperturbed
invariant torus. Further, we show the existence of the perturbed
integral manifold and study the dynamics on this manifold in Section \ref{sec:Investigation-of-the}.
Remaining proofs of main results are given in Section \ref{sec:Proofs-o-theorems}.
Finally, in Sections \ref{sec:example1} and \ref{sec:example2}
we illustrate our theory by two examples.

\section{\label{sec:Main-results}Main results}

In the co-rotating coordinates $x(t)=e^{A\alpha_{0}t}y(\beta_{0}t)$
the unperturbed equation (\ref{02}) reads as
\begin{equation}
\beta_{0}\frac{dy}{d\psi}=f(y)-\alpha_{0}Ay.\label{newcoord}
\end{equation}
 The quasiperiodic solution (\ref{qp}) to (\ref{02}) now appears
as $2\pi$-periodic solution $y(\psi)=x_{0}(\psi)$ to (\ref{newcoord}).
The corresponding variational equation around this solution is
\begin{equation}
\beta_{0}\frac{dy}{d\psi}=\left(f'(x_{0}(\psi))-\alpha_{0}A\right)y.\label{0per1}
\end{equation}
It is easy to verify (see Section \ref{sec:Unperturbed-system}),
that (\ref{0per1}) has two linear independent (because of assumption
(\ref{eq:nondeg})) periodic solutions
\begin{equation}
q_{1}(\psi):=\frac{dx_{0}}{d\psi}(\psi),\quad
q_{2}(\psi):=Ax_{0}(\psi),\label{q}
\end{equation}
 which correspond to the two trivial Floquet multipliers 1 of the
periodic solution $x_{0}$ to (\ref{newcoord}). One of these Floquet
multipliers appears because of the $S^{1}$-equivariance of (\ref{newcoord}),
and the other one because (\ref{newcoord}) is autonomous. From the
exponential orbital stability of (\ref{qp}) it follows that the trivial
Floquet multiplier 1 of the periodic solution $x_{0}$ to (\ref{newcoord})
has multiplicity two, and the absolute values of all other multipliers
are less than one. Therefore, there exist two solutions $p_{1}(\psi)$
and $p_{2}(\psi)$ to the adjoint variational equation
\begin{equation}
\beta_{0}\frac{dz}{d\psi}=-\left(f'(x_{0}(\psi))^{T}+\alpha_{0}A\right)z\label{0per2}
\end{equation}
 with
\begin{equation}
\label{24new}
p_{j}^{T}(\psi)q_{k}(\psi)=\delta_{jk}
\end{equation}
 for all $j,k=1,2$ and $\psi\in\mathbb{R}$, where $\delta_{jk}=1$
for $j=k$ and $\delta_{jk}=0$ otherwise.

\subsection{\emph{Non-degenerate case}}

Using notation (\ref{av}) we define a $2\pi$-periodic function $G_{1}:\mathbb{R}\to\mathbb{R}$
by
\begin{equation}
G_{1}(\psi):=\frac{1}{2\pi}\int\limits _{0}^{2\pi}p_{1}^{T}(\psi+\theta)g_{1}(x_{0}(\psi+\theta),\theta)d\theta\label{0per3}
\end{equation}
 and its maximum and minimum as
\[
G_{1}^{+}:=\max_{\psi\in[0,2\pi]}G_{1}(\psi),\quad G_{1}^{-}:=\min_{\psi\in[0,2\pi]}G_{1}(\psi).
\]
For the sake of simplicity we will suppose that all singular points
of $G_{1}$ are non-degenerate, i.e.
\[
G_{1}''(\psi)\not=0\mbox{ for all }\psi\mbox{ with }G_{1}'(\psi)=0.
\]
This implies that the set of singular points of $G_{1}$ consists
of an even number $2N$ of different points. The set of singular values
of $G_{1}$ will be denoted by
\[
S_{1}:=\{G_{1}(\psi_{1}),\ldots,G_{1}(\psi_{2N})\},\mbox{ where }\{\psi_{1},\ldots,\psi_{2N}\}=\{\psi\in[0,2\pi):\, G'_{1}(\psi)=0\}.
\]

Our first result describes the behavior (under the perturbation of
the forcing term in (\ref{01})) of $\mathcal{T}_{2}\times\mathbb{R}$,
which is an integral manifold to (\ref{02}), as well as the dynamics
of (\ref{01}) on the perturbed integral manifold to the leading order.

\begin{theorem}
\label{theorem01} For all $\beta_{1}<\beta_{2}$ there exist positive
$\gamma_{*}$ and $\alpha_{*}$ such that for all $\alpha, \beta$, and $\gamma$
satisfying
\begin{equation}
\alpha\ge\alpha_{*}, \quad \beta_1 \le \beta \le \beta_2, \quad 0 \le \gamma\le\gamma_{*},\quad\label{cond1}
\end{equation}
system (\ref{01}) has a three-dimensional integral manifold $\mathfrak{M}_{1}(\alpha,\beta,\gamma)$,
which can be parameterized by $\psi,\varphi,t\in\mathbb{R}$ in the
form
\begin{equation}
x=e^{A\varphi}x_{0}(\psi)+\gamma e^{A\varphi} U_{1}\left(\psi,\beta t,\gamma\right)+\frac{\gamma}{\alpha}U_{2}\left(\psi,\varphi,\beta t,\alpha t,\gamma,
\alpha^{-1}
\right),
\label{eq:IM0}
\end{equation}
where functions
$U_{1}$ and $U_2$ are $C^{l-2}$ smooth and $2\pi$-periodic in
$\psi,\varphi,\beta t$, and $\alpha t.$

The manifold $\mathfrak{M}_{1}(\alpha,\beta,\gamma)$ is orbitally
asymptotically stable and solutions on this manifold are determined
by the following system
\begin{eqnarray}
\frac{d\psi}{dt} & = & \beta_{0}+\gamma p_{1}^{T}(\psi)g_{1}(x_{0}(\psi),\beta t)+\gamma^{2}S_{11}+\frac{\gamma}{\alpha}S_{12},\label{tau1}\\
\frac{d\varphi}{dt} & = & \alpha_{0}+\gamma p_{2}^{T}(\psi)g_{1}(x_{0}(\psi),\beta t)+\gamma^{2}S_{21}+\frac{\gamma}{\alpha}S_{22},\label{tau2}
\end{eqnarray}
where functions
$S_{jk} = S_{jk}(\psi,\varphi,\beta t,\alpha t,\gamma,\alpha^{-1})$
 are $C^{l-2}$ smooth and $2\pi$-periodic in $\psi,\varphi,\beta t$, and $\alpha t.$

In addition, for any $\epsilon>0$ there exist positive $\alpha_1,  \gamma_1$, and $\delta$ such that for all
$\alpha, $ $\beta$, and $\gamma$ satisfying
\begin{equation}
\alpha \ge \alpha_1, \ 0 \le \gamma \le \gamma_1, \ \gamma G_{1}^{-}<\beta-\beta_{0}<\gamma G_{1}^{+},
\ \mathrm{dist}\left(\frac{\beta-\beta_{0}}{\gamma},\ S_{1}\right)\ge\epsilon
\label{cond2}
\end{equation}
the following statements hold:

1. System (\ref{01}) has
integral manifold $\mathfrak{M}_{1}(\alpha,\beta,\gamma)$ of the form
(\ref{eq:IM0}) and
 even number of two-dimensional integral submanifolds
$\mathfrak{N}_{j}\subset\mathfrak{M}_{1}(\alpha,\beta,\gamma),$ $j=1,...,2\tilde{N},$
$\tilde{N}=\tilde{N}(\alpha,\beta,\gamma)\le N,$ which are parameterized
by $\varphi,t\in\mathbb{R},$ in the form
\begin{eqnarray}
x & = & e^{A\varphi}x_{0}(\beta t+\vartheta_{j})+\gamma V_{1j}(\varphi,\beta t, \alpha t, \gamma, \alpha^{-1})+\frac{1}{\alpha}V_{2j}(\varphi,\beta t,\alpha t,\gamma,\alpha^{-1}).\label{xm}
\end{eqnarray}
The dynamic on the manifold $\mathfrak{N}_{j}$ is determined by a system of the type
\[
\frac{d\varphi}{dt}=\alpha_{0} + \gamma W_{j}\left(\varphi,\beta t,\alpha t,\gamma,\alpha^{-1}\right).
\]
Here $\vartheta_j$ are constants and functions
$V_{1j}$, $V_{2j}$, and $W_{j}$
are $C^{l-2}$ smooth and $2\pi$-periodic in
$\psi,\varphi,\beta t$, and $\alpha t.$

2. Every solution $x(t)$ of system (\ref{01}) that at a certain
moment of time $t_{0}$ belongs to a $\delta$-neighborhood of the
torus $\mathcal{T}_{2}$ tends to one of the manifolds $\mathfrak{N}_{j}$
as $t\to+\infty.$

\end{theorem}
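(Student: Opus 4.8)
The plan is to combine three ingredients prepared in Sections~\ref{sec:Averaging}--\ref{sec:Investigation-of-the}: an averaging transformation eliminating the fast wave phase $\alpha t$, a normally hyperbolic change of coordinates around $\mathcal{T}_{2}$, and persistence of integral manifolds together with a Bogolyubov-type second averaging in the slow modulation mismatch $\beta-\beta_{0}$. \emph{Step 1 (averaging and local coordinates).} Using the rotational symmetry (\ref{eq:symg}) I would pass to a phase $\varphi$ along the $e^{A\cdot}$-orbit, a phase $\psi$ along $x_{0}$, and a transverse coordinate $h$ in the strongly contracting Floquet bundle $E^{s}(\psi)$ of the variational equation (\ref{0per1}); this bundle and the adjoint covectors $p_{1}(\psi),p_{2}(\psi)$ from (\ref{24new}) exist because (\ref{qp}) is exponentially orbitally stable. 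Simultaneously, for $\alpha$ large, a near-identity transformation of size $O(\gamma/\alpha)$ replaces the forcing $g(\cdot,\beta t,\alpha t)$ by its wave average $g_{1}(\cdot,\beta t)$ from (\ref{av}) up to an $O(\gamma/\alpha)$ remainder, keeping $2\pi$-periodicity in $\psi,\varphi,\beta t,\alpha t$ and costing at most one derivative. In these variables (\ref{01}) takes the form $\dot\psi=\beta_{0}+\gamma p_{1}^{T}(\psi)g_{1}(x_{0}(\psi),\beta t)+\dots$, $\dot\varphi=\alpha_{0}+\gamma p_{2}^{T}(\psi)g_{1}(x_{0}(\psi),\beta t)+\dots$, $\dot h=B(\psi)h+O(\|h\|^{2})+O(\gamma)$, where $B(\psi)$ generates a uniform exponential contraction and $\gamma p_{j}^{T}g_{1}$ are the $q_{j}$-components of the projected forcing.

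\emph{Step 2 (the manifold $\mathfrak{M}_{1}$).} For $\gamma=0$ the slow manifold $\{h=0\}$ is normally hyperbolic, hence for $\alpha\ge\alpha_{*}$, $\beta\in[\beta_{1},\beta_{2}]$, $0\le\gamma\le\gamma_{*}$ it persists as a graph $h=\gamma H(\psi,\varphi,\beta t,\alpha t,\gamma,\alpha^{-1})+\dots$; undoing the two transformations and collecting powers of $\gamma$ and $\alpha^{-1}$ yields $\mathfrak{M}_{1}$ in the form (\ref{eq:IM0}) with reduced dynamics (\ref{tau1})--(\ref{tau2}), while the $h$-contraction gives orbital asymptotic stability. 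This proves the first assertion, under the weak condition (\ref{cond1}) only.

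\emph{Step 3 (the submanifolds $\mathfrak{N}_{j}$).} On $\mathfrak{M}_{1}$ put $\eta:=\psi-\beta t$, so $\dot\eta=(\beta_{0}-\beta)+\gamma p_{1}^{T}(\eta+\beta t)g_{1}(x_{0}(\eta+\beta t),\beta t)+O(\gamma^{2})+O(\gamma/\alpha)$; on the set (\ref{cond2}) one has $|\beta-\beta_{0}|=O(\gamma)$, so $\eta$ drifts at rate $O(\gamma)$ and first-order averaging over the fast oscillations is legitimate. Since $\frac{1}{2\pi}\int_{0}^{2\pi}p_{1}^{T}(\eta+\theta)g_{1}(x_{0}(\eta+\theta),\theta)\,d\theta=G_{1}(\eta)$ by (\ref{0per3}), the averaged equation is $\dot\eta=(\beta_{0}-\beta)+\gamma G_{1}(\eta)+O(\gamma^{2})+O(\gamma/\alpha)$, with equilibria solving $G_{1}(\eta)=(\beta-\beta_{0})/\gamma$. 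Because $G_{1}^{-}<(\beta-\beta_{0})/\gamma<G_{1}^{+}$ these exist, and because $\mathrm{dist}((\beta-\beta_{0})/\gamma,S_{1})\ge\epsilon$ excludes critical values, each is simple ($G_{1}'\neq0$) hence hyperbolic with linearization $\gamma G_{1}'(\eta)$; they form an even positive number $2\tilde N$, $1\le\tilde N\le N$, with alternating stability. Each persists, for $\alpha\ge\alpha_{1}$ and $0\le\gamma\le\gamma_{1}$, as a $2\pi/\beta$-periodic solution $\eta_{j}(t)$ of the full $\eta$-equation (again persistence of an integral manifold, now inside $\mathfrak{M}_{1}$); substituting $\psi=\beta t+\eta_{j}(t)$ into (\ref{eq:IM0}) produces the $\mathfrak{N}_{j}$ of the form (\ref{xm}), with $\vartheta_{j}$ the leading value of $\eta_{j}$ and surviving equation $\dot\varphi=\alpha_{0}+\gamma W_{j}(\varphi,\beta t,\alpha t,\gamma,\alpha^{-1})$.

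\emph{Step 4 (attraction) and the main difficulty.} For Statement~2 I compose two contractions: a solution entering a $\delta$-neighborhood of $\mathcal{T}_{2}$ is pulled exponentially onto $\mathfrak{M}_{1}$ by the $h$-contraction, and on $\mathfrak{M}_{1}$ the variable $\eta$ obeys a small perturbation of a flow on the circle whose only equilibria are the hyperbolic $\eta_{j}$; hence every orbit converges to one of them --- to a stable $\mathfrak{N}_{j}$ unless it already lies on an unstable one, which, being invariant, ``tends to'' itself, and the stable $\mathfrak{N}_{j}$ attract all but a set of measure zero. I expect the main obstacle to be the simultaneous two-scale bookkeeping: reconciling the wave-phase averaging (large $\alpha$) with the slow modulation averaging (mismatch of order $\gamma$), establishing the exponential dichotomy for (\ref{0per1}) and the persistence estimates for $\mathfrak{M}_{1}$ and for the $\mathfrak{N}_{j}$, and obtaining the claimed $C^{l-2}$-smoothness and $2\pi$-periodicity with all constants uniform over the full range $\alpha\ge\alpha_{*}$, $\beta_{1}\le\beta\le\beta_{2}$, $0\le\gamma\le\gamma_{*}$. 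Near the critical values $S_{1}$ of $G_{1}$ the second averaging degenerates (folds of equilibria), which is precisely why the hypothesis $\mathrm{dist}((\beta-\beta_{0})/\gamma,S_{1})\ge\epsilon$ must be imposed.
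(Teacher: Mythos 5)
Your proposal follows essentially the same route as the paper: averaging out the wave phase $\alpha t$, local coordinates $(\psi,\varphi,h)$ based on the trivialized stable bundle, Lyapunov--Perron/normal-hyperbolicity persistence of the three-dimensional manifold (Lemma~\ref{lemma2}), then $\psi=\beta t+\eta$, averaging in $\beta t$ to produce $G_{1}$, hyperbolic zeros of $G_{1}-(\beta-\beta_{0})/\gamma$ giving the $2\tilde N$ persistent submanifolds (Lemma~\ref{lemma4}, Corollary~\ref{cor66}), and the composed contraction/phase-portrait argument for attraction. The only slight imprecision is calling the persistent objects inside $\mathfrak{M}_{1}$ ``$2\pi/\beta$-periodic solutions $\eta_{j}(t)$'': since the $\eta$-equation retains $\varphi$- and $\alpha t$-dependence, they are graphs $\eta=v_{j}(\varphi,\beta t,\alpha t)$ over a two-torus, exactly as in the paper's Corollary~\ref{cor66} --- which your parenthetical remark about integral-manifold persistence already anticipates.
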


One of the main statements of this theorem is that under conditions
 (\ref{cond2}), within the perturbed
manifold $\mathfrak{M}_{1}(\alpha,\beta,\gamma)$, there exist lower-dimensional
{}``resonant'' manifolds $\mathfrak{N}_{j}$ corresponding to the
frequency locking. These manifolds attract all solutions from the
neighborhood of $\mathcal{T}_{2}$. Hence, the asymptotic behavior
of solutions is described by (\ref{xm}) and has the modulation frequency
$\beta$. One can observe also from (\ref{xm}), that the perturbed
dynamics is, in the leading order, a motion along $\mathcal{T}_{2}$
with the new modulation frequency. The following theorem describes
these locking properties more precisely.

\begin{theorem}
\label{thm:main} For any $\epsilon>0$ and $\epsilon_{1}>0$
there exist positive $\gamma_1, \alpha_1$, and $\delta$ such
that for all parameters $(\alpha,\beta,\gamma)$ satisfying the conditions
 (\ref{cond2}) and for any solution
$x(t)$ of system (\ref{01}) such that $\mbox{{\rm dist}}(x(t_{0}),\mathcal{T}_{2})<\delta$
for certain $t_{0}\in\mathbb{R}$ there exist $\sigma,T\in\mathbb{R}$
such that
\begin{equation}
\inf_{\phi}\|x(t)-e^{A\phi}x_{0}(\beta t+\sigma)\|<\epsilon_{1}\mbox{ for all }t>T.\label{theo3}
\end{equation}
\end{theorem}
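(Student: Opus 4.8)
The plan is to derive Theorem~\ref{thm:main} as an essentially immediate corollary of Theorem~\ref{theorem01}. Fix $\epsilon>0$ and $\epsilon_1>0$. Let $\delta$ and the positive constants under~(\ref{cond2}) be those furnished by Theorem~\ref{theorem01} (they depend on $\epsilon$ through the conditions~(\ref{cond2})); the bounds $\gamma_1$ and $\alpha_1$ will be further constrained below. The whole argument amounts to showing that a solution which, by statement~2 of Theorem~\ref{theorem01}, is eventually close to one of the resonant submanifolds $\mathfrak{N}_j$ is automatically close to a rigidly translated copy of the profile $x_0(\beta t+\sigma)$, because the correction terms in the parametrization~(\ref{xm}) are uniformly small.

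First I would record this uniform smallness. By Theorem~\ref{theorem01}, the functions $V_{1j}$ and $V_{2j}$ are $C^{l-2}$ and $2\pi$-periodic in $\varphi,\beta t,\alpha t$; since the number of submanifolds obeys $2\tilde N\le 2N$ with $N$ fixed, and all the $V_{ij}$ are bounded uniformly over the parameter range~(\ref{cond2}) and over $j$, there is a constant $C>0$, independent of $(\alpha,\beta,\gamma)$ and of $j$, with
\[
\bigl\|\gamma V_{1j}(\varphi,\beta t,\alpha t,\gamma,\alpha^{-1})+\tfrac1\alpha V_{2j}(\varphi,\beta t,\alpha t,\gamma,\alpha^{-1})\bigr\|\le C\bigl(\gamma+\alpha^{-1}\bigr)
\]
for all $\varphi,t\in\mathbb{R}$. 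Hence I may fix $\gamma_1$ small and $\alpha_1$ large so that $C(\gamma_1+\alpha_1^{-1})<\epsilon_1/2$; then the left-hand side above is $<\epsilon_1/2$ for every admissible $(\alpha,\beta,\gamma)$.

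Next, take any solution $x(t)$ of~(\ref{01}) with $\mathrm{dist}(x(t_0),\mathcal{T}_2)<\delta$ for some $t_0$. By statement~2 of Theorem~\ref{theorem01}, $x(t)$ tends as $t\to+\infty$ to one of the manifolds $\mathfrak{N}_j$; concretely, for each $t$ there is $\varphi^{*}(t)\in\mathbb{R}$ with
\[
\Bigl\|x(t)-e^{A\varphi^{*}(t)}x_0(\beta t+\vartheta_j)-\gamma V_{1j}(\varphi^{*}(t),\beta t,\alpha t,\gamma,\alpha^{-1})-\tfrac1\alpha V_{2j}(\varphi^{*}(t),\beta t,\alpha t,\gamma,\alpha^{-1})\Bigr\|\longrightarrow 0 .
\]
Pick $T$ so that this quantity is $<\epsilon_1/2$ for all $t>T$, and set $\sigma:=\vartheta_j$. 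By the triangle inequality together with the two bounds just obtained,
\[
\inf_{\phi}\|x(t)-e^{A\phi}x_0(\beta t+\sigma)\|\le\bigl\|x(t)-e^{A\varphi^{*}(t)}x_0(\beta t+\sigma)\bigr\|<\frac{\epsilon_1}{2}+\frac{\epsilon_1}{2}=\epsilon_1
\]
for all $t>T$, which is precisely~(\ref{theo3}).

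I do not expect a genuine obstacle in this last step: once Theorem~\ref{theorem01} is established, the proof is pure bookkeeping. The only point requiring a little care is the \emph{uniformity} of the constant $C$ — that the $V_{ij}$ are bounded independently of the control parameters and of $j$, and that $\tilde N$ stays bounded by the fixed $N$ — and this is exactly what the wording of Theorem~\ref{theorem01} supplies. All the real work (existence of $\mathfrak{M}_1$ and of the submanifolds $\mathfrak{N}_j$, their parametrizations, and the attractivity in statement~2) is absorbed into the proof of Theorem~\ref{theorem01}.
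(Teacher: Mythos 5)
Your proposal is correct and follows essentially the same route as the paper: the authors also obtain (\ref{theo3}) directly from statement~2 of Theorem~\ref{theorem01}, i.e.\ from the attraction to some $\mathfrak{N}_{j}$ together with the parametrization (\ref{xm}) and the smallness of $\gamma$ and $1/\alpha$, taking $\sigma=\vartheta_{j}$. Your extra care about the uniformity of the bounds on $V_{1j},V_{2j}$ (which in the paper comes from the parameter-independent constants in Lemma~\ref{lemma2} and Corollary~\ref{cor66}) only makes explicit what the paper leaves implicit.
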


The conditions for the locking  (\ref{cond2})
depend on the properties of the function $G_{1}$.
For the case that $S_{1}$ consists of two numbers $G_{1}^{-}$ and
$G_{1}^{+}$ only, i.e. that $G_{1}$ has only two singular values,
the set of parameters satisfying these conditions is illustrated in
Fig. \ref{fig:generic} (left) for a fixed value of $\alpha\ge\alpha_1$.
The admissible values of the parameters $\beta$ and $\gamma$
belong to a cone with linear boundaries
\[
\beta=\beta_{0}+\gamma\left(G_{1}^{-}+\epsilon\right)\mbox{ and }\beta=\beta_{0}+\gamma\left(G_{1}^{+}-\epsilon\right),
\]
 bounded from above by $\gamma\le\gamma_1$.

If $G_{1}$ has more than two singular values, then the corresponding
regions in the parameter space
\[
\left|\beta-\beta_{0}-\gamma G_{1}(\psi_{j})\right|<\gamma\epsilon
\]
should be excluded. Here $\psi_{j}$ are the additional singular points
of $G_{1}$. An example is shown in Fig.~\ref{fig:generic} (right)
for the case with two additional singular points.
\begin{figure}
\begin{centering}
\includegraphics[width=0.8\linewidth]{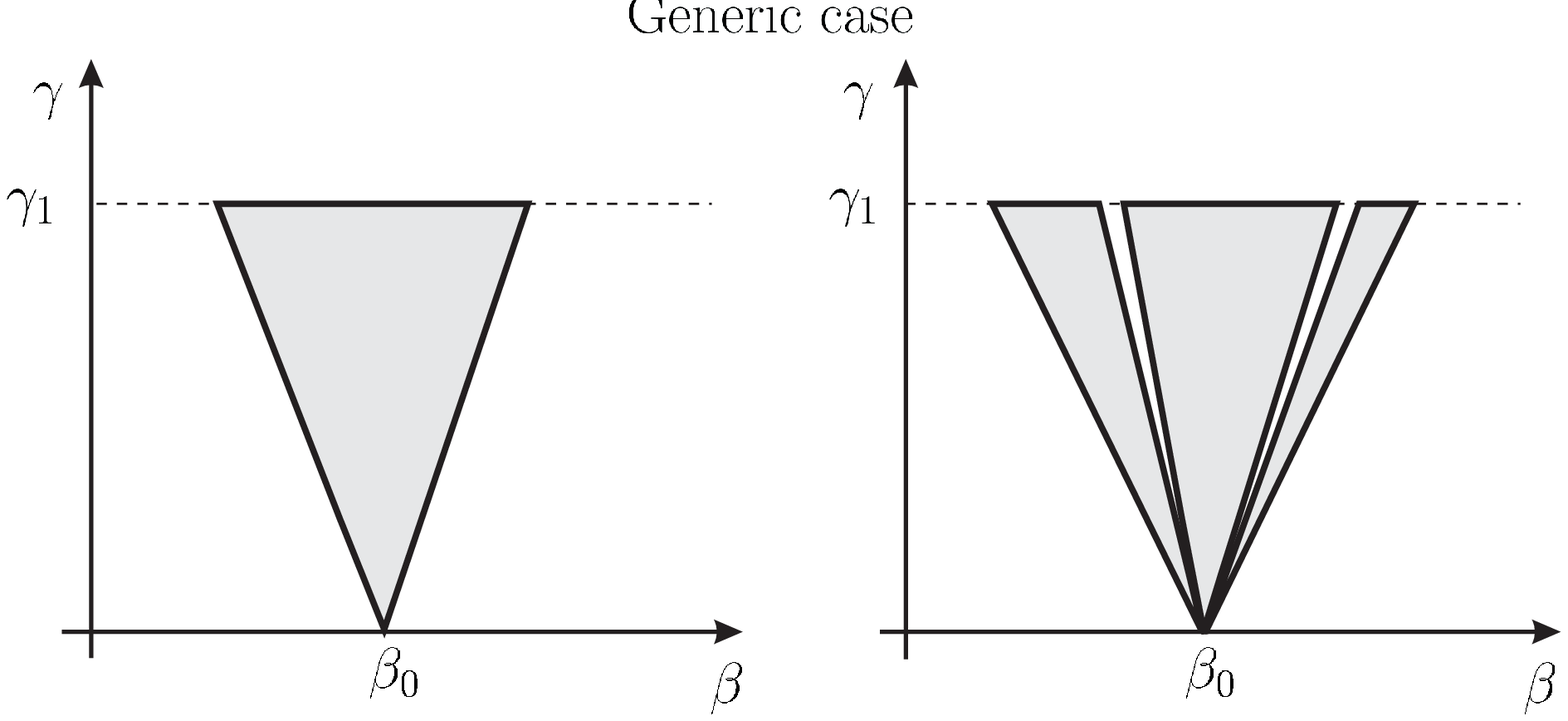}
\par\end{centering}

\caption{\label{fig:generic}Cross-section of the locking regions with respect to the frequency
$\beta$ and amplitude $\gamma$ by $\alpha={\mathrm{const}}$
of the external perturbation for
the generic case $g_{1}\not\equiv0$. }
\end{figure}

\subsection{Degenerate case}

In this subsection we suppose that
\begin{equation}
\frac{1}{2\pi}\int_{0}^{2\pi}g(x,\psi,\varphi)d\varphi=0\mbox{ for all }x\in\mathbb{R}^{n}\mbox{ and }\psi\in\mathbb{R}.\label{deg}
\end{equation}
In this case the functions $g_{1}$ and $G_{1}$, which are defined in (\ref{av})
and (\ref{0per3}), are identically zero and, hence, cannot give any
information about locking behavior like in Theorem \ref{theorem01}.
Instead, the following functions $g_{2}:\mathbb{R}^{n}\times\mathbb{R}\to\mathbb{R}^{n}$
and $G_{2}:\mathbb{R}\to\mathbb{R}$ will define the locking:
\[
g_{2}(x,\psi):=-
\frac{1}{2\pi}\int_{0}^{2\pi}
\frac{\partial u_1}{\partial x}(x,\psi,\varphi)
g(x,\psi,\varphi) d\varphi
\]
 and
\begin{equation}
G_{2}(\psi):=\frac{1}{2\pi}\int\limits _{0}^{2\pi}p_{1}^{T}(\psi+\theta)g_{2}(x_{0}(\psi+\theta),\theta)d\theta.\label{01per3}
\end{equation}
Here
$$
u_1(x,\psi,\varphi) = \int^{\varphi}
g(x,\psi,\theta)d\theta
$$
is the antiderivative of $g(x,\psi,\varphi)$ satisfying
\[
\int_{0}^{2\pi}u_1(x,\psi,\varphi)d\varphi=0.
\]
 Now we proceed as in the non-degenerate case: We denote
\[
G_{2}^{+}:=\max_{\psi\in[0,2\pi]}G_{2}(\psi),\quad G_{2}^{-}:=\min_{\psi\in[0,2\pi]}G_{2}(\psi)
\]
 and suppose that all singular points of $G_{2}$ are non-degenerate
\[
G_{2}''(\psi)\not=0\mbox{ for all }\psi\mbox{ with }G_{2}'(\psi)=0.
\]
This implies that the set of singular points of $G_{2}$ consists
of an even number $2N$ of different points. The set of singular values
of $G_{2}$ will be denoted by
\[
S_{2}:=\{G_{2}(\psi_{1}),\ldots,G_{2}(\psi_{2N})\},\mbox{ where }\{\psi_{1},\ldots,\psi_{2N}\}=\{\psi\in[0,2\pi):\, G'_{2}(\psi)=0\}.
\]

Our next result describes the locking behavior of the dynamics close
to ${\mathcal{T}}_{2}$ in the degenerate case.

\begin{theorem}
\label{theorem011} Suppose (\ref{deg}) holds.
Then for all $\beta_1 < \beta_2$
there exist positive
constants $\mu_{*}$ and $\alpha_{*}$ such that for all $\alpha, \beta$,
and $\gamma$ satisfying
\begin{equation}
\beta_1 \le \beta \le \beta_2, \ \gamma^{2}/\alpha\le\mu_{*}, \ \alpha\ge\alpha_{*},\label{eq:cond00}
\end{equation}
system (\ref{01}) has a three-dimensional integral manifold $\mathfrak{M}_{2}(\alpha,\beta,\gamma)$,
which can be parameterized by $\psi,\varphi,t\in\mathbb{R}$ in the
form
\begin{equation}
x=e^{A\varphi}x_{0}(\psi)+\frac{\gamma}{\alpha}\tilde{U}_{1}\left(\psi,\varphi,\beta t,\alpha t,
\frac{\gamma^{2}}{\alpha}
,
\frac{1}{\alpha}
\right)
+\frac{\gamma^{2}}{\alpha}\tilde{U}_{2}\left(\psi,\varphi,\beta t,\alpha t,\frac{\gamma^{2}}{\alpha},\frac{1}{\alpha}\right),\label{eq:IM0-deg}
\end{equation}
where functions $\tilde{U}_{1}$ and $\tilde{U}_{2}$
 are $C^{l-3}$ smooth
and $2\pi$-periodic in $\psi,\varphi,\beta t$, and $\alpha t.$

The manifold $\mathfrak{M}_{2}(\alpha,\beta,\gamma)$ is orbitally
asymptotically stable and dynamics on this manifold is determined
by the following system
\begin{eqnarray}
\frac{d\psi}{dt} & = & \beta_{0}+\frac{\gamma^{2}}{\alpha}p_{1}^{T}(\psi){g}_{2}(x_{0}(\psi),\beta t)+\frac{\gamma}{\alpha^{2}}\tilde{S}_{11}+\frac{\gamma^{2}}{\alpha^{2}}\tilde{S}_{12}+\frac{\gamma^{4}}{\alpha^{2}}\tilde{S}_{13},\label{eq:psi}\\
\frac{d\varphi}{dt} & = & \alpha_{0}+\frac{\gamma^{2}}{\alpha}p_{2}^{T}(\psi){g}_{2}(x_{0}(\psi),\beta t)+\frac{\gamma}{\alpha^{2}}\tilde{S}_{21}+\frac{\gamma^{2}}{\alpha^{2}}\tilde{S}_{22}+\frac{\gamma^{4}}{\alpha^{2}}\tilde{S}_{23},\label{eq:theta}
\end{eqnarray}
where functions $\tilde{S}_{mk} = (\psi,\varphi,\beta t,\gamma^{2}/\alpha,1/\alpha)$
 are $C^{l-3}$ smooth
and $2\pi$-periodic in $\psi,\varphi,\beta t$, and $\alpha t.$

In addition, for any $\epsilon>0$, there exists some positive
$\alpha_1, c_{1}$, $c_2$, and $\delta$ such that for all $\alpha, \beta$, and $\gamma,$
satisfying
\begin{equation}
\alpha \ge \alpha_1, \  \frac{c_1}{\alpha}\le\gamma\le c_2\sqrt{\alpha}, \
\frac{\gamma^{2}}{\alpha}G_{2}^{-}<\beta-\beta_{0}<\frac{\gamma^{2}}{\alpha}G_{2}^{+}, \
\mathrm{dist}\left(\frac{\alpha}{\gamma^{2}}(\beta-\beta_{0}),S_{2}\right)\ge\epsilon,\label{cond33}
\end{equation}
then the following statements hold:

 1. System (\ref{01}) has
integral manifold $\mathfrak{M}_{2}(\alpha,\beta,\gamma)$ of the form
(\ref{eq:IM0-deg}) and
 even number of two-dimensional integral submanifolds
$\mathfrak{L}_{j}\subset\mathfrak{M}_{2},$ $j=1,...,2\tilde{N}, \ \tilde{N} = \tilde{N}(\alpha, \beta, \gamma) \le N,$
which are parametrized by $\varphi, t \in \mathbb{R}$ in the form
\begin{equation}
x=e^{A\varphi}x_{0}(\beta t+\vartheta_{j})+\frac{\gamma}{\alpha}\tilde{V}_{1j} + \frac{\gamma^{2}}{\alpha}\tilde{V}_{2j}
+ \frac{1}{\alpha}\tilde{V}_{3j} +  \frac{1}{\alpha\gamma}\tilde{V}_{4j},   \label{xm-deg}
\end{equation}
 where  $C^{l-3}$ smooth functions
 $\tilde{V}_{ij} = \tilde{V}_{1j}(\varphi,\beta t,\alpha t,{\gamma^{2}}/{\alpha},{1}/{\alpha}, {1}/{\alpha\gamma})$
 are $2\pi$-periodic in $\varphi, \beta t$, and $\alpha t.$
 The system on the manifold $\mathfrak{L}_{j}$ reduces to
\[
\frac{d\varphi}{dt}=\alpha_{0}+\frac{\gamma^{2}}{\alpha}\tilde{W}_{j1}+\frac{\gamma}{\alpha^{2}}\tilde{W}_{j2}+\frac{1}{\alpha^{3}}\tilde{W}_{j3},
\]
where functions $\tilde{W}_{j1} = \tilde{W}_{j1}(\varphi,\beta t,\alpha t,{\gamma^{2}}/{\alpha},{1}/{\alpha}, {1/\alpha\gamma})$
are $C^{l-3}$ smooth
and $2\pi$-periodic in $\varphi,\beta t$, and $\alpha t.$

 2. Every solution $x(t)$ of system (\ref{01}) that at a certain moment of
time $t_{0}$ belongs to $\delta$-neighborhood of the torus $\mathcal{T}_{2}$
tends to one of the manifolds $\mathfrak{L}_{j}$ as $t\to+\infty.$
\end{theorem}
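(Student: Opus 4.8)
\textbf{Proof proposal for Theorem \ref{theorem011}.}

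The plan is to mirror the structure of the proof of Theorem \ref{theorem01} (the non-degenerate case), but with the rescaled small parameter $\mu := \gamma^2/\alpha$ playing the role that $\gamma$ played before, and with an extra preliminary averaging step to kill the first-order term. First I would perform the near-identity change of variables $x = e^{A\varphi}\big(y + \tfrac{\gamma}{\alpha}u_1(y,\beta t,\alpha t)\big)$, where $u_1$ is the antiderivative of $g$ introduced in the statement (normalized to have zero mean in $\varphi$); since $\int_0^{2\pi} u_1\,d\varphi = 0$, this substitution eliminates the $O(\gamma)$ forcing and produces a new system whose forcing term is $\tfrac{\gamma^2}{\alpha} g_2(y,\beta t) + $ (faster-oscillating and higher-order-in-$1/\alpha$ remainders), using $\tfrac{d}{dt}u_1(y,\beta t,\alpha t) = \alpha\,\partial_\varphi u_1 + O(1)$ to generate the cancellation. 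This is exactly the computation whose output is encoded in equations (\ref{eq:psi})--(\ref{eq:theta}) once one further passes to the local coordinates $(\psi,\varphi)$ adapted to the torus $\mathcal{T}_2$.

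Next I would invoke the local-coordinate construction of Section \ref{sec:Local-coordinates}: write points near $\mathcal{T}_2$ as $e^{A\varphi}\big(x_0(\psi) + \text{(transverse part)}\big)$, using the projectors built from the Floquet solutions $q_1,q_2$ and their duals $p_1,p_2$ satisfying (\ref{24new}). The exponential orbital stability of $x_0$ (i.e. the spectral gap of the transverse Floquet multipliers) makes the transverse equation a contraction in the appropriate sense, so an application of the integral-manifold theorem (the same existence/persistence result used in Section \ref{sec:Investigation-of-the}) yields the three-dimensional manifold $\mathfrak{M}_2(\alpha,\beta,\gamma)$ in the form (\ref{eq:IM0-deg}), with the reduced dynamics on it given by (\ref{eq:psi})--(\ref{eq:theta}); orbital asymptotic stability comes along with the manifold. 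The smoothness loss from $C^l$ to $C^{l-3}$ is bookkeeping: one derivative is lost in the averaging substitution involving $u_1$, one more in passing to local coordinates, and one more in the integral-manifold theorem.

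The heart of the matter is then the second part: analyzing the reduced $\psi$-equation (\ref{eq:psi}) under conditions (\ref{cond33}). Averaging (\ref{eq:psi}) over the fast $\varphi$ and $\alpha t$ oscillations and over the $2\pi$-periodic slow phase reduces the leading dynamics of $\psi$ relative to $\beta t$ to $\dot\sigma = \beta_0 - \beta + \tfrac{\gamma^2}{\alpha}G_2(\sigma) + (\text{smaller})$, where $\sigma = \psi - \beta t$ and $G_2$ is as in (\ref{01per3}). When $\tfrac{\alpha}{\gamma^2}(\beta-\beta_0)$ lies strictly between $G_2^-$ and $G_2^+$ and stays $\epsilon$-away from the singular values $S_2$, the equation $G_2(\sigma) = \tfrac{\alpha}{\gamma^2}(\beta-\beta_0)$ has an even number $2\tilde N$ of transverse (hence hyperbolic) roots, alternately attracting and repelling; each attracting root persists under the perturbation as an invariant two-dimensional submanifold $\mathfrak{L}_j \subset \mathfrak{M}_2$ of the form (\ref{xm-deg}), carrying the reduced $\varphi$-flow shown. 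The lower bound $\gamma \ge c_1/\alpha$ (equivalently $1/(\alpha\gamma)$ small) is needed precisely so that the remainder terms carrying the $\tfrac{1}{\alpha\gamma}$-coefficients in (\ref{xm-deg}) are controlled; the upper bound $\gamma \le c_2\sqrt{\alpha}$ is $\mu = \gamma^2/\alpha$ small. Statement 2, that every trajectory starting in a $\delta$-tube around $\mathcal{T}_2$ converges to one of the $\mathfrak{L}_j$, follows because the complement of the repelling submanifolds in $\mathfrak{M}_2$ is the basin of the union of attracting ones (a one-dimensional phase-line argument in the $\sigma$ variable, lifted), together with the attraction onto $\mathfrak{M}_2$ itself.

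I expect the main obstacle to be the bookkeeping of the two-parameter asymptotics: unlike the classical single-parameter averaging, here one must simultaneously track powers of $\gamma$ and of $1/\alpha$, show that the only term of order $\mu = \gamma^2/\alpha$ surviving the fast average is the $G_2$ term, and verify that the parameter window $c_1/\alpha \le \gamma \le c_2\sqrt\alpha$ is exactly what makes every displayed remainder negligible relative to $\mu$. Keeping the estimates uniform in $(\alpha,\beta,\gamma)$ over this non-rectangular region — in particular that $\gamma_*,\alpha_*,\delta$ can be chosen once and for all — is where the care is required, but it is structurally the same argument as in Theorem \ref{theorem01} with $\gamma \rightsquigarrow \gamma^2/\alpha$.
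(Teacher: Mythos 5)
Your overall skeleton is the paper's: average to expose $g_{2}$, pass to local coordinates near $\mathcal{T}_{2}$, build the attracting integral manifold $\mathfrak{M}_{2}$ by a contraction/integral-manifold argument, reduce to a phase equation whose averaged form is $\dot\sigma=\beta_{0}-\beta+\tfrac{\gamma^{2}}{\alpha}G_{2}(\sigma)+\dots$, obtain the $\mathfrak{L}_{j}$ from the hyperbolic zeros of $G_{2}-\Delta$, and finish with a phase-line attraction argument; your reading of the parameter window ($\gamma\le c_{2}\sqrt{\alpha}$ is $\mu=\gamma/\sqrt{\alpha}$ small, $\gamma\ge c_{1}/\alpha$ controls the $\tfrac{1}{\alpha\gamma}$-remainders, i.e. the paper's condition $\varepsilon\le c_{0}\mu^{1/3}$) is also correct.

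However, there is a genuine gap at the first step. You perform a single near-identity substitution with the corrector $\tfrac{\gamma}{\alpha}u_{1}$ and claim the outcome is $\tfrac{\gamma^{2}}{\alpha}g_{2}$ plus remainders that are ``faster-oscillating and higher-order-in-$1/\alpha$'', and that this already yields (\ref{eq:psi})--(\ref{eq:theta}). That is not what one substitution gives. After eliminating the $O(\gamma)$ oscillation (and using $g_{1}\equiv0$), the system is (\ref{av01}): it still contains the zero-mean fast terms $\tfrac{\gamma}{\alpha}A_{1}$ and $\tfrac{\gamma^{2}}{\alpha}A_{2}$, whose orders are \emph{not} smaller than the resonant term $\tfrac{\gamma^{2}}{\alpha}g_{2}$ — indeed $\tfrac{\gamma}{\alpha}$ dominates $\tfrac{\gamma^{2}}{\alpha}$ throughout the lower part of the window $\gamma\sim c_{1}/\alpha$, and the oscillatory part of $\tfrac{\gamma^{2}}{\alpha}A_{2}$ is of exactly the same order as $g_{2}$. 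The paper's second averaging (\ref{change2}), with correctors $\tfrac{\gamma}{\alpha^{2}}u_{21}$, $\tfrac{\gamma^{2}}{\alpha^{2}}u_{22}$ determined by (\ref{eq:yyyttt}), is precisely what pushes the $\alpha t$-dependent terms down to the orders $\tfrac{\gamma}{\alpha^{2}}$, $\tfrac{\gamma^{2}}{\alpha^{2}}$, $\tfrac{\gamma^{4}}{\alpha^{2}}$ appearing in (\ref{eq:psi})--(\ref{eq:theta}), i.e. produces the form (\ref{av2}) $=$ (\ref{av010}) with the fast terms carrying the extra factors $\varepsilon^{2}$, $\varepsilon^{3}/\mu$. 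Without this step the hypotheses of the integral-manifold lemma and of the submanifold construction (where the smallness condition $\varepsilon\le c_{0}\mu^{1/3}$ is used to absorb exactly those factors uniformly over $c_{1}/\alpha\le\gamma\le c_{2}\sqrt{\alpha}$) are not met, so neither the stated remainder structure of (\ref{eq:psi})--(\ref{eq:theta}) nor the uniform locking estimates follow from your construction as written. You do acknowledge in your last paragraph that only the $G_{2}$ term should survive the fast average, but the argument that delivers this is the missing second averaging transformation; once it is inserted, your route coincides with the paper's (your derivative count $C^{l}\to C^{l-3}$ then also matches: one derivative per averaging step and one for the manifold construction).
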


Theorem \ref{theorem011} gives verifiable conditions on the parameters
$(\alpha,\beta,\gamma)$, for which the locking of modulation frequency
to the modulation frequency $\beta$ of the perturbation takes place
for the case when the degeneracy condition $g_{1}\equiv0$ is fulfilled.
These conditions are given by  (\ref{cond33})
and differ from the conditions of locking in the generic case given
by Theorem \ref{theorem01}. The locking phenomenon in the leading
order looks similarly in both cases, i.e. the solutions tend asymptotically
to $\mathcal{T}_{2}$ and modulation frequency $\beta$. More precisely,
the following theorem holds.

\begin{theorem}
\label{thm:main2-1} For any $\epsilon>0$ and $\epsilon_{1}>0$
there exist positive  $\alpha_{1}$,  $c_{1}$, $c_2$, and $\delta$
such that for all parameters $(\alpha,\beta,\gamma)$ satisfying the
conditions  (\ref{cond33}) and
for any solution $x(t)$ of system (\ref{01}) such that $\mbox{{\rm dist}}(x(t_{0}),\mathcal{T}_{2})<\delta$
for certain $t_{0}\in\mathbb{R}$ there exist $\sigma,T\in\mathbb{R}$
such that
\begin{equation}
\inf_{\phi}\|x(t)-e^{A\psi}x_{0}(\beta t+\sigma)\|<\epsilon_{1}\mbox{ for all }t>T.\label{theo6}
\end{equation}
\end{theorem}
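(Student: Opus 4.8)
The plan is to deduce Theorem \ref{thm:main2-1} as a direct corollary of Theorem \ref{theorem011}, following the same pattern that (in Section \ref{sec:Proofs-o-theorems}) yields Theorem \ref{thm:main} from Theorem \ref{theorem01}. The idea is that part~2 of Theorem \ref{theorem011} already pins every nearby solution onto one of the two-dimensional resonant manifolds $\mathfrak{L}_j$, whose points are described by the explicit formula (\ref{xm-deg}); it then remains only to observe that, under conditions (\ref{cond33}), the four correction terms in (\ref{xm-deg}) are uniformly small, so that on $\mathfrak{L}_j$ a solution stays $\epsilon_1$-close to the torus curve $\varphi\mapsto e^{A\varphi}x_0(\beta t+\vartheta_j)$. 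The only genuinely new content relative to Theorem \ref{theorem011} is upgrading the qualitative statement ``$x(t)$ tends to $\mathfrak{L}_j$'' to the quantitative $\epsilon_1$-estimate (\ref{theo6}), uniformly over the admissible parameter set, and this is a matter of fixing the constants in the right order.

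First I would fix $\epsilon>0$ and apply Theorem \ref{theorem011} with this $\epsilon$, obtaining preliminary constants $\alpha_1^{0}\ge\alpha_*$, $c_1^{0}$, $c_2^{0}$, $\delta^{0}$, the manifolds $\mathfrak{L}_j$, and the $C^{l-3}$ functions $\tilde V_{1j},\dots,\tilde V_{4j}$ in (\ref{xm-deg}). These functions are $2\pi$-periodic in $\varphi,\beta t,\alpha t$ and depend on the auxiliary arguments $\gamma^{2}/\alpha\in[0,\mu_*]$, $1/\alpha\in[0,1/\alpha_1^{0}]$, $1/(\alpha\gamma)\in[0,1/c_1^{0}]$, which range over compact intervals once (\ref{cond33}) with $(\alpha_1^0,c_1^0,c_2^0)$ is imposed. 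By continuity there is therefore a constant $M$, independent of $(\alpha,\beta,\gamma)$, with $\|\tilde V_{ij}\|\le M$ for all $i=1,\dots,4$ and all $j$ (and similarly the $\tilde W_{jk}$ are bounded, which controls the reduced flow but is not needed for (\ref{theo6})).

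Next I would shrink the admissible region. Choose $\alpha_1\ge\alpha_1^{0}$ large, $c_1\ge c_1^{0}$ large, and $c_2\le c_2^{0}$ small so that, whenever (\ref{cond33}) holds,
\[
\frac{\gamma}{\alpha}\le\frac{c_2}{\sqrt{\alpha}}\le\frac{c_2}{\sqrt{\alpha_1}},\qquad
\frac{\gamma^{2}}{\alpha}\le c_2^{2},\qquad
\frac{1}{\alpha}\le\frac{1}{\alpha_1},\qquad
\frac{1}{\alpha\gamma}\le\frac{1}{c_1},
\]
and each of the four right-hand sides is smaller than $\epsilon_1/(8M)$. Note that the last estimate is exactly where the (otherwise curious) lower bound $\gamma\ge c_1/\alpha$ in (\ref{cond33}) enters: without it the term $\tfrac{1}{\alpha\gamma}\tilde V_{4j}$ need not be small. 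Since passing to a smaller parameter domain does not increase $M$, this choice is consistent, i.e. there is no circularity in the order of quantifiers. With it, $\sup_{\varphi,t}$ of the sum of the four correction terms in (\ref{xm-deg}) is below $\epsilon_1/2$.

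Finally, set $\delta:=\delta^{0}$ and let $x(t)$ be a solution of (\ref{01}) with $\mathrm{dist}(x(t_0),\mathcal{T}_2)<\delta$. By part~2 of Theorem \ref{theorem011} it tends to some $\mathfrak{L}_j$ as $t\to+\infty$; freezing $t$ in (\ref{xm-deg}), this means $\mathrm{dist}\bigl(x(t),\{e^{A\varphi}x_0(\beta t+\vartheta_j)+\text{(corrections)}:\varphi\in\mathbb{R}\}\bigr)\to0$. Hence there is $T$ such that for $t>T$ one may choose $\varphi^{\ast}=\varphi^{\ast}(t)$ with
\[
\Bigl\|x(t)-e^{A\varphi^{\ast}}x_0(\beta t+\vartheta_j)-\tfrac{\gamma}{\alpha}\tilde V_{1j}-\tfrac{\gamma^{2}}{\alpha}\tilde V_{2j}-\tfrac{1}{\alpha}\tilde V_{3j}-\tfrac{1}{\alpha\gamma}\tilde V_{4j}\Bigr\|<\frac{\epsilon_1}{2},
\]
and combining this with the $\epsilon_1/2$ bound on the corrections gives $\|x(t)-e^{A\varphi^{\ast}}x_0(\beta t+\vartheta_j)\|<\epsilon_1$, so that (\ref{theo6}) holds with $\sigma:=\vartheta_j$ and $\phi:=\varphi^{\ast}(t)$. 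I do not expect any substantial analytic obstacle here: everything hard is already packed into Theorem \ref{theorem011}, and the only point requiring care is the bookkeeping just described — extracting the uniform bound $M$ on the preliminary domain before enlarging $\alpha_1,c_1$ and shrinking $c_2$.
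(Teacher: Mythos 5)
Your proposal is correct and follows essentially the same route as the paper: the paper's own proof is the one-line observation that Theorem \ref{thm:main2-1} follows from part~2 of Theorem \ref{theorem011} and the representation (\ref{xm-deg}), using the smallness of $\gamma/\alpha$, $\gamma^{2}/\alpha$ and $1/(\alpha\gamma)$ under (\ref{cond33}). Your version merely makes explicit the uniform bound on the $\tilde V_{ij}$ and the order in which $\alpha_1$, $c_1$, $c_2$ are adjusted, which is consistent with the uniform estimates already provided in Lemmas \ref{lemma2} and \ref{lemma4}.
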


Let us illustrate the set of parameters  (\ref{cond33})
leading to the locking and compare it with the generic case. For a
fixed large enough $\alpha$, the region in the parameter space $(\beta,\gamma)$
has again a shape of a cone as in Fig.~\ref{fig:generic}, but now
the cone has tangent boundaries
\begin{equation}
\beta=\beta_{0}+\frac{\gamma^{2}}{\alpha}\left(G_{2}^{\mp}\pm\epsilon\right)\label{eq:bouny}
\end{equation}
leading to a smaller synchronization domain for moderate values of
$\gamma$, see Fig.~\ref{fig:deg}. Small values of $\gamma$ are
even excluded $\gamma>c_1/\alpha$. But, on the other hand, the
synchronization is now allowed for large values of $\gamma$ up to
$c_{2}\sqrt{\alpha}$. This means practically, that the locking
occurs not only for small but also for large amplitude perturbations,
contrary to the case $g_{1}\not\equiv0$.

Again, when the singular set $S_{2}$ of $G_{2}$ contains more than
2 points, then the corresponding regions in the parameter space
\[
\left|\beta-\beta_{0}-\frac{\gamma^{2}}{\alpha}\psi_{j}\right|<\frac{\gamma^{2}}{\alpha}\epsilon
\]
should be excluded. Here $\psi_{j}$ are additional points from $S_{2}$.
Example in Fig.~\ref{fig:deg} (right), shows the case with two additional
singular points.

\begin{figure}
\begin{centering}
\includegraphics[width=0.8\linewidth]{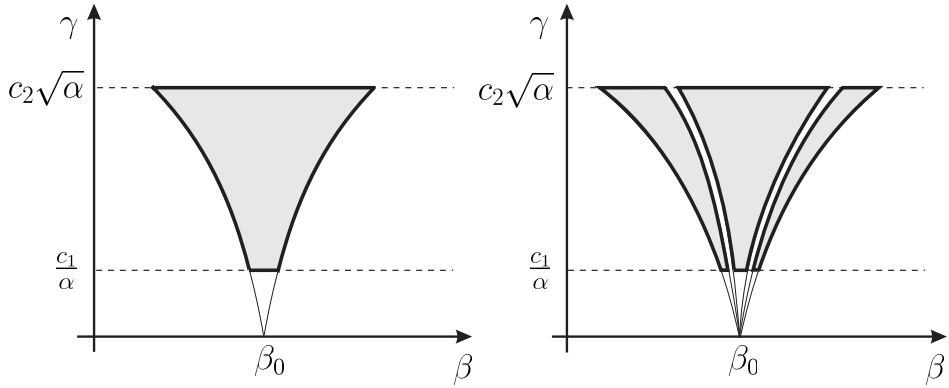}
\par\end{centering}

\caption{\label{fig:deg}Cross-sections of the locking regions with respect to the frequency $\beta$
and amplitude $\gamma$ by $\alpha=\mathrm{const}$
of the external perturbation for the degenerate
case $g_{1}\equiv0$. }
\end{figure}

Finally note that the locking phenomenon described in \cite{Recke2011}
for a simple model with symmetry corresponds to the case when an additional
degeneracy takes place with $g_{2}\equiv0$. As it is shown in \cite{Recke2011},
the locking cone becomes even more high ($\gamma\le\mu_{*}\alpha$)
and thin with the slope proportional to $\gamma^{2}/\alpha^{2}$ instead
of $\gamma^{2}/\alpha$ in (\ref{eq:bouny}).

\section{Averaging\label{sec:Averaging}}

We perform changes of variables which depend on the average of the perturbation function
$g$ with respect to fast oscillation argument $\alpha t.$
As the result of these transformations we obtain an equivalent system, where the fast oscillation
terms have the order of magnitude of $\gamma / \alpha$ and $\gamma^2 / \alpha$ and smaller.
The principles and details of the averaging procedure can be found e.g. in \cite{Bogoliubov1961,Hale1980,Sanders2007}.
This transformation has the form
\begin{eqnarray}
x & = & x_{1}+\frac{\gamma}{\alpha}u_{1}(x,\beta t,\alpha t).\label{change1}
\end{eqnarray}
 Inserting (\ref{change1}) into (\ref{01}), we obtain
\[
\frac{dx_{1}}{dt}+\frac{\gamma}{\alpha}\left(\frac{\partial u_{1}}{\partial x}\frac{dx}{dt}+\frac{\partial u_{1}}{\partial(\beta t)}\beta\right)+\gamma\frac{\partial u_{1}}{\partial(\alpha t)}=f\left(x_{1}+\frac{\gamma}{\alpha}u_{1}\right)+\gamma g(x,\beta t,\alpha t).
\]
 Accordingly to the idea of the averaging method, the terms of order
$\gamma$ depending on the high frequency $\alpha t$ should vanish
due to the choice of $u_{1}$. This leads to the condition
\begin{equation}
\frac{\partial u_{1}}{\partial(\alpha t)}=g(x,\beta t,\alpha t)-g_{1}(x,\beta t),\label{sr1}
\end{equation}
 where
\[
g_{1}(x,\beta t)=\frac{1}{2\pi}\int_{0}^{2\pi}g(x,\beta t,\varphi)d\varphi.
\]
 Hence
\[
u_{1}(x,\beta t,\alpha t)=\int_{\varphi_{0}}^{\alpha t}\left(g(x,\beta t,\varphi)-g_{1}(x,\beta t)\right)d\varphi
\]
is a periodic function in $\alpha t$ and $\beta t$.
Strictly speaking, the above integral is considered componentwise, and
the initial points $\varphi_0$ can be different for
each component of vector-function $u_{1}$.
Additionally, we choose the initial points $\varphi_{0,j}$ in such a way that
\[
\int_{0}^{2\pi}u_{1}(x,\beta t,\varphi)d\varphi=0.
\]
 The resulting averaged system reads
\begin{eqnarray}
\frac{dx_{1}}{dt} & = & f(x_{1})+\gamma g_{1}(x_{1},\beta t)+\frac{\gamma}{\alpha}A_{1}(x_{1},\beta t,\alpha t)+\frac{\gamma^{2}}{\alpha}A_{2}(x_{1},\beta t,\alpha t)\nonumber \\[2mm]
 &  & +\frac{\gamma^{2}}{\alpha^{2}}X_{1}(x_{1},\beta t,\alpha t,\frac{\gamma}{\alpha})+\frac{\gamma^{3}}{\alpha^{2}}X_{2}(x_{1},\beta t,\alpha t,\frac{\gamma}{\alpha}),\label{av01}
\end{eqnarray}
 where
\begin{eqnarray*}
A_{1}(x_{1},\beta t,\alpha t) & = & \frac{\partial f}{\partial x}u_{1}-\frac{\partial u_{1}}{\partial x}f-\frac{\partial u_{1}}{\partial(\beta t)}\beta,\\[2mm]
A_{2}(x_{1},\beta t,\alpha t) & = & -\frac{\partial u_{1}}{\partial x}g+\frac{\partial g_{1}}{\partial x}u_{1}.
\end{eqnarray*}
 Functions $A_{j}(x_{1},\beta t,\alpha t)$ and $X_{j}(x_{1},\beta t,\alpha t,\frac{\gamma}{\alpha}),j=1,2,$
are $C^{l-1}$-smooth and $2\pi$-periodic in $\beta t$ and $\alpha t.$
Here we use the fact that for small $\gamma / \alpha$ the change of variables (\ref{change1})
is equivalent to
\begin{eqnarray}
x & = & x_{1}+\frac{\gamma}{\alpha}\tilde u_{1}(x_1,\beta t,\alpha t, \frac{\gamma}{\alpha})\label{change1p}
\end{eqnarray}
with smooth bounded and periodic with respect to $\beta t$ and $\alpha t$ function $\tilde u_{1}.$

Note that the averaged term $g_{1}(x,\psi)$ is equivariant with respect
to the group action $e^{A\xi}:$
\begin{eqnarray}
g_{1}(e^{A\xi}x,\psi) & = & \frac{1}{2\pi}\int_{0}^{2\pi}g(e^{A\xi}x,\psi,\varphi)d\varphi\\[2mm]
 & = & \frac{1}{2\pi}\int_{0}^{2\pi}e^{A\xi}g(x,\psi,\varphi-\xi)d\varphi=e^{A\xi}g_{1}(x,\psi).
\end{eqnarray}

If $g_{1}(x,\psi)\equiv0$ we perform the second averaging change
of variables
\begin{eqnarray}
x_{1} & = & x_{2}+\frac{\gamma}{\alpha^{2}}u_{21}(x_{1},\beta t,\alpha t)+\frac{\gamma^{2}}{\alpha^{2}}u_{22}(x_{1},\beta t,\alpha t).\label{change2}
\end{eqnarray}
 The functions $u_{21}$ and $u_{22}$ are selected from the conditions
that the terms of the orders $\gamma/\alpha$ and $\gamma^{2}/\alpha$,
which depend on high frequency $\alpha t$, vanish. This leads to
\begin{equation}
\frac{\partial u_{21}}{\partial(\alpha t)}=A_{1}(x_{1},\beta t,\alpha t),\quad\frac{\partial u_{22}}{\partial(\alpha t)}=A_{2}(x_{1},\beta t,\alpha t)-g_{2}(x_{1},\beta t),\label{eq:yyyttt}
\end{equation}
 and
\begin{equation}
g_{2}(x_{1},\beta t)=\frac{1}{2\pi}\int_{0}^{2\pi}A_{2}(x_{1},\beta t,\varphi)d\varphi=-\frac{1}{2\pi}\int_{0}^{2\pi}\frac{\partial u_{1}(x_{1},\beta t,\varphi)}{\partial x}g(x_{1},\beta t,\varphi)d\varphi.\label{2aver}
\end{equation}
 We have used in (\ref{eq:yyyttt}) the following property of $A_{1}$:
\[
\frac{1}{2\pi}\int_{0}^{2\pi}A_{1}(x_{1},\beta t,\varphi)d\varphi=0.
\]

The second averaging function $g_{2}(x,\beta t)$ is also equivariant
with respect to the symmetry group action
\[
g_{2}(e^{A\xi}x,\psi)=e^{A\xi}g_{2}(x,\psi).
\]
 This can be seen from the following calculations
\begin{eqnarray*}
 &  & g_{2}(e^{A\xi}x,\psi)=-\frac{1}{2\pi}\int_{0}^{2\pi}\left(\int_{\varphi_{0}}^{\varphi}\frac{\partial g}{\partial x}(e^{A\xi}x,\psi,\theta)d\theta\right)g(e^{A\xi}x,\psi,\varphi)d\varphi\\[2mm]
 &  & =-\frac{1}{2\pi}\int_{0}^{2\pi}\left(\int_{\varphi_{0}}^{\varphi}e^{A\xi}\frac{\partial g}{\partial x}(x,\psi,\theta-\xi)e^{-A\xi}d\theta\right)e^{A\xi}g(x,\psi,\varphi-\xi)d\varphi\\[2mm]
 &  & =-e^{A\xi}\frac{1}{2\pi}\int_{0}^{2\pi}\left(\int_{\varphi_{0}-\xi}^{\varphi-\xi}\frac{\partial g}{\partial x}(x,\psi,\theta)d\theta\right)g(x,\psi,\varphi-\xi)d\varphi\\[2mm]
 &  & =-e^{A\xi}\frac{1}{2\pi}\int_{-\xi}^{2\pi-\xi}\left(\int_{\varphi_{0}}^{\varphi}\frac{\partial g}{\partial x}(x,\psi,\theta)d\theta\right)g(x,\psi,\varphi)d\varphi\\[2mm]
 &  & -e^{A\xi}\frac{1}{2\pi}\int_{\varphi_{0}-\xi}^{\varphi_{0}}\frac{\partial g}{\partial x}(x,\psi,\theta)d\theta\int_{-\xi}^{2\pi-\xi}g(x,\psi,\varphi)d\varphi\\[2mm]
 &  & =-e^{A\xi}\frac{1}{2\pi}\int_{0}^{2\pi}\left(\int_{\varphi_{0}}^{\varphi}\frac{\partial g}{\partial x}(x,\psi,\theta)d\theta\right)g(x,\psi,\varphi)d\varphi=e^{A\xi}g_{2}(x,\psi).
\end{eqnarray*}
 After the second averaging, in the case $g_{1}(x,\beta t)\equiv0$
, the system admits the form
\begin{equation}
\frac{dx_{2}}{dt}=f(x_{2})+\frac{\gamma^{2}}{\alpha}g_{2}(x_{2},\beta t)+\frac{\gamma}{\alpha^{2}}g_{3}(x_{2},\beta t,\alpha t,\frac{\gamma^{2}}{\alpha},\frac{1}{\alpha})+\frac{\gamma^{2}}{\alpha^{2}}g_{4}(x_{2},\beta t,\alpha t,\frac{\gamma^{2}}{\alpha},\frac{1}{\alpha}),\label{av2}
\end{equation}
 where $C^{l-2}$-smooth functions $g_{3}$ and $g_{4}$ are $2\pi$-periodic
in $\alpha t$ and $\beta t$ and function $g_{2}(x_{2},\beta t)$
is $2\pi$-periodic in $\beta t$ and equivariant with respect to
action $e^{A\xi}.$ Note that by (\ref{change1}) and (\ref{change2}) the variable $x$ is expressed by $x_2$
in a form
\begin{eqnarray} \label{change2p}
x = x_2 + \frac{\gamma}{\alpha}\tilde u_{2}(x_2,\beta t,\alpha t, \frac{\gamma^2}{\alpha}, \frac{1}{\alpha})
\end{eqnarray}
with smooth bounded and periodic with respect to $\beta t$ and $\alpha t$ function $\tilde u_{2}.$

\section{Useful properties of the unperturbed system\label{sec:Unperturbed-system}}

In this section, we consider useful properties of the linearized unperturbed
system and introduce an appropriate basis (matrix $\Phi_{0}$), which
locally splits the coordinates along the invariant torus (\ref{eq:torus})
and transverse to it. Further, this basis will be used in section
\ref{sec:Local-coordinates} for the introduction of appropriate local
coordinate system.

Since the unperturbed system (\ref{02}) has quasiperiodic solution
$\tilde{x}(t)=e^{A\alpha_{0}t}x_{0}(\beta_{0}t)$ then
\begin{equation}
\frac{dx_{0}(\beta_{0}t)}{dt}+\alpha_{0}Ax_{0}(\beta_{0}t)=f(x_{0}(\beta_{0}t)).\label{riv1}
\end{equation}
 The corresponding variational system
\begin{equation}
\frac{dy}{dt}=\frac{\partial f(\tilde{x})}{\partial x}y\label{var}
\end{equation}
 has two quasiperiodic solutions
\begin{equation}
Ae^{A\alpha_{0}t}x_{0}(\beta_{0}t)\,\,\,\ \mbox{and}\,\,\, \ e^{A\alpha_{0}t}\frac{dx_{0}(\beta_{0}t)}{dt}.\label{eq:2sol}
\end{equation}
 The following properties of the Jacobian follow from the equivariance
conditions

\begin{equation}
\frac{\partial f(x)}{\partial x}=e^{-A\xi}\frac{\partial f(e^{A\xi}x)}{\partial x}e^{A\xi},\label{eq:Jaccond1}
\end{equation}

\begin{equation}
Af(x)=\frac{\partial f(x)}{\partial x}Ax.\label{eq:Jaccond2}
\end{equation}
 The latter conditions and the change of variables $y(t)=e^{A\alpha_{0}t}w(\beta_{0}t)$
in (\ref{var}) lead to
\begin{equation}
\frac{dw}{d\psi}=B_{0}(\psi)w,\qquad B_{0}(\psi)=\frac{1}{\beta_{0}}\left(\frac{\partial f(x_{0}(\psi))}{\partial x}-A\alpha_{0}\right),\label{var3}
\end{equation}
 where $\psi=\beta_{0}t$. The linear periodic system (\ref{var3})
has two periodic solutions
\begin{equation}
q_{1}(\psi)=\frac{dx_{0}(\psi)}{d\psi}, \quad q_{2}(\psi)=Ax_{0}(\psi), \label{eq:qq}
\end{equation}
 see (\ref{eq:2sol}).

Let $\mathfrak{Z}$ be the trivial vector bundle $\mathfrak{Z}=\left(\mathbb{R}^{n}\times\mathbb{T}_{1},\mathbb{T}_{1},\rho\right)$,
where $\rho$ is the natural projection onto $\mathbb{T}_{1}$. Consider
corresponding to (\ref{var3}) linear skew-product flow with time
$\psi$ and $x_{0}\in\mathbb{R}^{n}$, $\psi_{0}\in\mathbb{T}_{1}$
\begin{equation}
\pi(\psi,x_{0},\psi_{0})=\left(\Omega(\psi,\psi_{0})x_{0},\psi+\psi_{0}\right),\label{eq:*}
\end{equation}
 where $\Omega(\psi,\psi_{0})$ is the fundamental solution of (\ref{var3})
such that $\Omega(\psi_{0},\psi_{0})=I$. The vector bundle $\mathfrak{Z}$
is the sum of two sub-bundles $\mathfrak{Z}_{1}$ and $\mathfrak{Z}_{2}$,
which are invariant with respect to the flow (\ref{eq:*}). The two-dimensional
bundle $\mathfrak{Z}_{1}$ consists of periodic solutions of (\ref{var3})
spanned by two linearly independent periodic solutions (\ref{eq:qq}).
The solutions from the complementary bundle $\mathfrak{Z}_{2}$ tend
exponentially to zero as $t\to\infty.$ Since the bundle $\mathfrak{Z}_{1}$
is trivial, the bundle $\mathfrak{Z}_{2}$ is stably trivial. By \cite[p. 117]{Husemoller1993},
any stably trivial vector bundle whose fiber dimension exceeds its
base dimension is trivial. Therefore, if $n>3$, the $(n-2)$-dimensional
bundle $\mathfrak{Z}_{2}$ is trivial and there exists a smooth map
$\Phi_{0}:\mathbb{T}_{1}\to\mathcal{L}(\mathbb{R}^{n-2},\mathbb{R}^{n})$,
which is isomorphism between $\mathfrak{Z}_{2}$ and $\mathbb{T}_{1}\times\mathbb{R}^{n-2}.$
In the case $n=3$, $\Phi_{0}(\psi)$ is a $3-$dimensional vector
function, whose existence can be shown by direct analysis.

By construction, $n\times n$-matrix
\begin{equation}
\Phi_{1}(\psi)=\{q_{1}(\psi),q_{2}(\psi),\Phi_{0}(\psi)\}\label{eq:phiphi}
\end{equation}
 is non-degenerate for all $\psi.$ By the change of variables $w=\Phi_{1}(\psi)z$
system (\ref{var3}) is transformed to system
\begin{equation}
\frac{dz}{d\psi}=\mathrm{diag}\left\{ 0,0,\frac{H(\psi)}{\beta_{0}}\right\}z,\label{var4}
\end{equation}
 where $(n-2)\times(n-2)$ matrix $H(\psi)$ is $2\pi$-periodic in
$\psi$ and subsystem
\[
\frac{dz_{2}}{d\psi}=\frac{1}{\beta_{0}}H(\psi)z_{2}
\]
 is exponentially stable.

Since linear periodic system (\ref{var3}) has two nonzero linearly
independent periodic solutions, the adjoint system
\begin{equation}
\frac{dw}{d\psi}=-B_{0}^{T}(\psi)w\label{adjoint}
\end{equation}
 has also two nonzero linearly independent periodic solutions $p_{10}(\psi)$
and $p_{20}(\psi)$ \cite{Demidovich1967}. Corresponding to (\ref{adjoint})
linear skew-product flow on $\mathfrak{Z}=\left(\mathbb{R}^{n}\times\mathbb{T}_{1},\mathbb{T}_{1},\rho\right)$
has two invariant sub-bundles $\tilde{\mathfrak{Z}}_{1}$ and $\tilde{\mathfrak{Z}}_{2}$.
Two-dimensional bundle $\tilde{\mathfrak{Z}}_{1}$ consists of periodic
solutions of (\ref{adjoint}) and solutions from $\tilde{\mathfrak{Z}}_{2}$
tend exponentially to zero as $t\to-\infty.$ As in the case of system
(\ref{var3}), for the adjoint system (\ref{var4}) there exists a
smooth map $\tilde{\Phi}_{00}:\mathbb{T}_{1}\to\mathcal{L}(\mathbb{R}^{n-2},\mathbb{R}^{n})$,
which is isomorphism between $\mathfrak{\tilde{Z}}_{2}$ and $\mathbb{T}_{1}\times\mathbb{R}^{n-2}.$
Therefore, the matrix
\[
\tilde{\Phi}_{1}(\psi)=\{p_{10}(\psi),p_{20}(\psi),\tilde{\Phi}_{00}(\psi)\},
\]
 is non-degenerate for all $\psi$.

Let us show that the following holds

\begin{equation}
\tilde{\Phi}_{1}^{T}(\psi)\Phi_{1}(\psi)=\mathrm{diag}\left\{ C_{1},C_{2}(\psi)\right\} ,\label{eq:phiphit}
\end{equation}
 where $C_{1}$ is a constant non-degenerate $2\times2$ matrix and
$C_{2}(\psi)$ is a $(n-2)\times(n-2)$ non-degenerate periodic matrix.
Indeed, the scalar product of any solution of (\ref{var3}) and any
solution of its adjoint system is constant for all $\psi$ \cite{Demidovich1967}.
Hence, $p_{i0}^{T}(\psi)q_{j}(\psi)=\mathrm{const}$. Moreover, the
scalar product $\tilde{w}^{T}(\psi)w(\psi)$ of any solution $w(\psi)$
from $\mathfrak{Z}_{1}$ of system (\ref{var3}) and a solution $\tilde{w}(\psi)$
from $\mathfrak{\tilde{Z}}_{2}$ of its adjoint system is zero for
all $\psi$, since $w(\psi)$ is periodic and $\tilde{w}(\psi)\to0$
for $\psi\to-\infty$. Therefore, for all $\psi$, the fibers $\mathfrak{Z}_{1}(\psi)$
and $\tilde{\mathfrak{Z}}_{2}(\psi)$ over point $\psi$ are orthogonal
as subspaces of $\mathbb{R}^{n}$. Similarly, the subspaces $\mathfrak{\tilde{Z}}_{1}(\psi)$
and $\mathfrak{Z}_{2}(\psi)$ are orthogonal. This leads to the block-diagonal
form of the product (\ref{eq:phiphit}).

From the equality (\ref{eq:phiphit}) follows that $\Phi_{1}^{-1}(\psi)=\mathrm{diag}\left\{ C_{1}^{-1},C_{2}^{-1}(\psi)\right\} \tilde{\Phi}_{1}^{T}(\psi).$
Therefore, the matrix $\Phi_{1}^{-1}(\psi)$ has the form
\begin{equation}
\Phi_{1}^{-1}(\psi)=\{p_{1}(\psi),p_{2}(\psi),\tilde{\Phi}_{0}(\psi)\}^{T},\label{eq:phi-1-1}
\end{equation}
 where $p_{1}(\psi)$ and $p_{2}(\psi)$ are periodic solutions of
(\ref{adjoint}) such that $p_{1}^{T}(\psi)q_{1}(\psi)=p_{2}^{T}(\psi)q_{2}(\psi)=1$
and $p_{1}^{T}(\psi)q_{2}(\psi)=p_{1}^{T}(\psi)q_{2}(\psi)=0$ for
all $\psi$.

Taking into account (\ref{var4}), it can be verified that
\[
\frac{d\Phi_{1}(\psi)}{d\psi}+\Phi_{1}(\psi)\mathrm{diag}\left\{0,0,\frac{H(\psi)}{\beta_{0}}\right\}=B_{0}(\psi)\Phi_{1}(\psi).
\]
 Then the $n\times(n-2)$-matrix $\Phi_{0}(\psi)$ satisfies the relation
\begin{eqnarray}
\frac{d\Phi_{0}(\psi)}{d\psi}+\frac{1}{\beta_{0}}\Phi_{0}(\psi)H(\psi) & = & B_{0}(\psi)\Phi_{0}(\psi).\label{relation1}
\end{eqnarray}

\section{\label{sec:Local-coordinates}Local coordinates}

In this section we write systems (\ref{av01}) and (\ref{av2}), which
appear after the averaging transformation, in the local coordinates
in the vicinity of the invariant torus. Systems (\ref{av01}) and
(\ref{av2}) have the form

\begin{equation}
\frac{dx}{dt}=f(x)+\mu^{2}\bar{g}(x,\beta t)+\mu^{2}\varepsilon^{2}r_{1}(x,\beta t,\alpha t,\mu,\varepsilon)+\mu\varepsilon^{3}r_{2}(x,\beta t,\alpha t,\mu,\varepsilon),\label{av010}
\end{equation}
where $f$ and $\bar{g}$ are $S^{1}-$equivariant, i.e. $f(e^{A\varphi}x)=e^{A\varphi}f(x)$
and $\bar{g}(e^{A\varphi}x,\beta t)=e^{A\varphi}\bar{g}(x,\beta t).$
In particular, system (\ref{av01}) can be obtained from (\ref{av010}) by setting $\gamma=\mu^{2},1/\alpha=\varepsilon^{2}$,
$\bar{g}=g_{1}$, and $r_{2}\equiv0$. System (\ref{av2}) has the
form (\ref{av010}) if $\gamma^{2}/\alpha=\mu^{2},1/\alpha=\varepsilon^{2}$,
and $\bar{g}=g_{2}$, respectively. Here the perturbation terms $\mu^{2}\varepsilon^{2}r_{1}$
and $\mu\varepsilon^{3}r_{2}$ are defined from (\ref{av01}) and
(\ref{av2}) in a straightforward way. In this and the following section,
we consider $\mu$ and $\varepsilon$ as independent parameters.

We introduce new coordinates $\psi,\varphi$ and $h$ instead of $x$
in the neighborhood of the two-dimensional torus $\mathcal{T}_{2}$
by the formula
\begin{equation}
x=e^{A\varphi}\left(x_{0}(\psi)+\Phi_{0}(\psi)h\right),\label{eq:zamina}
\end{equation}
where $h\in\mathbb{R}^{n-2}$, $\|h\|\le h_{0}$ and the matrix $\Phi_{0}(\psi)$
was defined by the trivialization of bundle $\mathfrak{Z}_{2}$, see
(\ref{eq:phiphi}).

Substituting (\ref{eq:zamina}) into (\ref{av010}), we obtain
\begin{multline}
e^{A\varphi}\left(\frac{dx_{0}(\psi)}{d\psi}+\frac{d\Phi_{0}(\psi)h}{d\psi}\right)\frac{d\psi}{dt}+e^{A\varphi}\left(Ax_{0}(\psi)+A\Phi_{0}(\psi)h\right)\frac{d\varphi}{dt}+e^{A\varphi}\Phi_{0}(\psi)\frac{dh}{dt}\\
=f(e^{A\varphi}(x_{0}(\psi)+\Phi_{0}(\psi)h))+\mu^{2}\bar{g}(e^{A\varphi}(x_{0}(\psi)+\Phi_{0}(\psi)h),\beta t)\\
+\mu^{2}\varepsilon^{2}r_{1}(e^{A\varphi}(x_{0}(\psi)+\Phi_{0}(\psi)h),\beta t,\alpha t,\mu,\varepsilon)\\
+\mu\varepsilon^{3}r_{2}(e^{A\varphi}(x_{0}(\psi)+\Phi_{0}(\psi)h),\beta t,\alpha t,\mu,\varepsilon).\label{zamina2}
\end{multline}
 The following transformations allow to split the variables $h,\psi$,
and $\varphi.$ Let us write (\ref{zamina2}) in the form
\begin{multline}
\left(\frac{dx_{0}(\psi)}{d\psi}+\frac{d\Phi_{0}(\psi)h}{d\psi}\right)\left(\frac{d\psi}{dt}-\beta_{0}\right)\\
+A\left(x_{0}(\psi)+\Phi_{0}(\psi)h\right)\left(\frac{d\varphi}{dt}-\alpha_{0}\right)+\Phi_{0}(\psi)\left(\frac{dh}{dt}-H(\psi)h\right)\\
=e^{-A\varphi}f(e^{A\varphi}(x_{0}(\psi)+\Phi_{0}(\psi)h))+\mu^{2}e^{-A\varphi}\bar{g}(e^{A\varphi}(x_{0}(\psi)+\Phi_{0}(\psi)h),\beta t)\\
-\left(\frac{dx_{0}(\psi)}{d\psi}+\frac{d\Phi_{0}(\psi)h}{d\psi}\right)\beta_{0}-A\left(x_{0}(\psi)+\Phi_{0}(\psi)h\right)\alpha_{0}-\Phi_{0}(\psi)H(\psi)h\\
+\mu^{2}\varepsilon^{2}e^{-A\varphi}r_{1}(e^{A\varphi}(x_{0}(\psi)+\Phi_{0}(\psi)h),\beta t,\alpha t,\mu,\varepsilon)\\
+\mu\varepsilon^{3}e^{-A\varphi}r_{2}(e^{A\varphi}(x_{0}(\psi)+\Phi_{0}(\psi)h),\beta t,\alpha t,\mu,\varepsilon).\label{zamina22}
\end{multline}
 Taking into account (\ref{riv1}) and (\ref{relation1}) we have
\begin{multline}
\left(\Phi_{1}(\psi)+\Phi_{2}(\psi,h)\right)\left(\begin{array}{c}
\frac{d\psi}{dt}-\beta_{0}\\
\frac{d\varphi}{dt}-\alpha_{0}\\
\frac{dh}{dt}-H(\psi)h
\end{array}\right)=f(x_{0}+\Phi_{0}h)-f(x_{0})-\frac{\partial f(x_{0})}{\partial x}\Phi_{0}h\\
+\mu^{2}\bar{g}(x_{0},\beta t)+\mu^{2}\tilde{g}(h,\psi,\beta t)h+\mu^{2}\varepsilon^{2}r_{3}(h,\psi,\varphi,\beta t,\alpha t,\mu,\varepsilon)+\mu\varepsilon^{3}r_{4}(h,\psi,\varphi,\beta t,\alpha t,\mu,\varepsilon),\label{zam2}
\end{multline}
 where
\begin{gather*}
\Phi_{1}(\psi)=\left[\frac{dx_{0}(\psi)}{d\psi}\ \,\, Ax_{0}(\psi)\ \,\,\Phi_{0}(\psi)\right],\\
\Phi_{2}(\psi,h)=\left[\frac{d\Phi_{0}(\psi)}{d\psi}h\,\,\ A\Phi_{0}(\psi)h\ \,\,0\right],\\
\tilde{g}(h,\psi,\beta t)h=\bar{g}(x_{0}(\psi)+\Phi_{0}(\psi)h,\beta t)-\bar{g}(x_{0}(\psi),\beta t).
\end{gather*}
 Since by our construction $\det\Phi_{1}(\psi)\neq0$ for all $\psi\in\mathbb{T}_{1},$
the matrix $\Phi_{1}(\psi)+\Phi_{2}(\psi,h)$ is invertible for sufficiently
small $h$ and
\[
(\Phi_{1}(\psi)+\Phi_{2}(\psi,h))^{-1}=\Phi_{1}^{-1}(\psi)+\tilde{H}(h,\psi)=\left(\begin{array}{c}
p_{1}^{T}(\psi)\\
p_{2}^{T}(\psi)\\
\tilde{\Phi}_{0}^{T}(\psi)
\end{array}\right)+\left(\begin{array}{c}
\tilde{H}_{1}(h,\psi)\\
\tilde{H}_{2}(h,\psi)\\
\tilde{H}_{3}(h,\psi)
\end{array}\right),
\]
 where $C^{l-3}$-smooth function $\tilde{H}(h,\psi)=\mathcal{O}(\|h\|)$
is periodic in $\psi.$

Therefore, system (\ref{zam2}) can be solved with respect to derivatives
$d\psi/dt,$ $d\varphi/dt$ and $dh/dt:$
\begin{eqnarray}
\frac{d\psi}{dt} & = & \beta_{0}+\mu^{2}p_{1}^{T}(\psi)\bar{g}(x_{0},\beta t)+R_{11}+\mu^{2}R_{12}+\mu^{2}\varepsilon^{2}R_{13}+\mu\varepsilon^{3}R_{14},\label{ss1}\\[3mm]
\frac{d\varphi}{dt} & = & \alpha_{0}+\mu^{2}p_{2}^{T}(\psi)\bar{g}(x_{0},\beta t)+R_{21}+\mu^{2}R_{22}+\mu^{2}\varepsilon^{2}R_{23}+\mu\varepsilon^{3}R_{24},\label{ss2}\\[3mm]
\frac{dh}{dt} & = & H(\psi)h+R_{01}+\mu^{2}R_{02}+\mu^{2}\varepsilon^{2}R_{03}+\mu\varepsilon^{3}R_{04},\label{ss3}
\end{eqnarray}
 where
\begin{eqnarray*}
 &  & R_{j1}=R_{j1}(h,\psi,\mu)=(p_{j}^{T}(\psi)+\tilde{H}_{j}(h,\psi))F_{1}(h,\psi),\\[2mm]
 &  & R_{j2}=R_{j2}(h,\psi,\beta t,\mu)=(p_{j}^{T}(\psi)+\tilde{H}_{j}(h,\psi))\tilde{g}(h,\psi,\beta t)h+\tilde{H}_{j}(h,\psi)\bar{g}(x_{0},\beta t),\\[2mm]
 &  & R_{jk}=R_{jk}(h,\psi,\varphi,\beta t,\alpha t,\mu,\varepsilon)=(p_{j}^{T}(\psi)+\tilde{H}_{j}(h,\psi))r_{k}(h,\psi,\varphi,\beta t,\alpha t,\mu,\varepsilon),\\[2mm]
 &  & R_{01}=R_{01}(h,\psi,\mu)=(\tilde{\Phi}_{0}(\psi)+\tilde{H}_{3}(h,\psi))F_{1}(h,\psi),\\[2mm]
 &  & R_{02}=R_{02}(h,\psi,\beta t,\mu)=(\tilde{\Phi}_{0}(\psi)+\tilde{H}_{3}(h,\psi))\bar{g}(x_{0}(\psi)+\Phi_{0}(\psi)h,\beta t),\\[2mm]
 &  & R_{0k}=R_{03}(h,\psi,\varphi,\beta t,\alpha t,\mu,\varepsilon)=(\tilde{\Phi}_{0}(\psi)+\tilde{H}_{3}(h,\psi))r_{k}(h,\psi,\varphi,\beta t,\alpha t,\mu,\varepsilon),\\[2mm]
 &  & F_{1}(h,\psi)=f(x_{0}+\Phi_{0}h)-f(x_{0})-\frac{\partial f(x_{0})}{\partial x}\Phi_{0}h=\mathcal{O}(\|h\|^{2})
\end{eqnarray*}
 with $j=1,2$, $k=3,4$.

The following lemma establishes the existence of the perturbed manifold.
Note that the similar lemma has been proved in \cite{Recke2011} for
the system with constant matrix $H$ and somewhat different parameter
dependences.

\begin{lemma}
\label{lemma2} For $\mu\in(0,\mu_{0})$ and $\varepsilon\in(0,\varepsilon_{0})$
with sufficiently small $\mu_{0}$ and $\varepsilon_{0}$, the system
(\ref{ss1}) -- (\ref{ss3}) has an integral manifold
\begin{equation}
\mathfrak{M}_{\mu,\varepsilon}=\{(h,\psi,\varphi,t):\, h=u(\psi,\varphi,\beta t,\alpha t,\mu,\varepsilon),\,(\psi,\varphi)\in\mathbb{T}_{1}\times\mathbb{T}_{1},t\in\mathbb{R}\},\label{mani}
\end{equation}
 where the function $u$ has the form
\begin{multline}
u(\psi,\varphi,\beta t,\alpha t,\mu,\varepsilon)=\mu^{2}u_{0}(\psi,\beta t,\mu)\\
+\varepsilon^{3}\mu u_{1}(\psi,\varphi,\beta t,\alpha t,\mu,\varepsilon)+\varepsilon^{2}\mu^{2}u_{2}(\psi,\varphi,\beta t,\alpha t,\mu,\varepsilon).\label{mani00}
\end{multline}
 Here functions $u_{0},u_{1}$ and $u_{2}$ are $C^{l-3}$-smooth,
$2\pi$-periodic in $\psi,\varphi,\beta t$ and $\alpha t$, satisfying
$\|u_{j}\|_{C^{l-3}}\le M_{1},\ j=0,1,2,$ with positive constant
$M_{1}$, which does not depend on $\mu,\varepsilon,$ and $\alpha\ge\alpha_{0}$.
Here $\|.\|_{C^{l-3}}$ is the norm of functions from $C^{l-3}(\mathbb{T}_{4})$
with fixed parameters $\mu$ and $\varepsilon.$

The integral manifold $\mathfrak{M}_{\mu,\varepsilon}$ is asymptotically
stable, i.e. there exists $\nu_{0}=\nu_{0}(\mu,\varepsilon_{0})$
such that for every initial value $(h,\psi,\varphi)$ at time $\tau$
with $\|h\|\le\nu_{0}$ there exists a unique $(\psi_{0},\varphi_{0})$
such that
\begin{multline}
\|N(t,\tau,h,\psi,\varphi)-N(t,\tau,u(\psi_{0},\varphi_{0},\beta\tau,\alpha\tau,\mu,\varepsilon),\psi_{0},\varphi_{0})\|\\
\le Le^{-\kappa(t-\tau)}\|(h,\psi,\varphi)-(u(\psi_{0},\varphi_{0},\beta\tau,\alpha\tau,\mu,\varepsilon),\psi_{0},\varphi_{0})\|,\label{mani2}
\end{multline}
 where $t\ge\tau,$ $L$ and $\kappa$ are some positive constants
not depending on $\alpha,\mu,\varepsilon,$ $N(t,\tau,h,\psi,\varphi)$
is the solution of the system (\ref{ss1}) -- (\ref{ss3}) with an
initial value $N(\tau,\tau,h,\psi,\varphi)=(h,\psi,\varphi)$.
\end{lemma}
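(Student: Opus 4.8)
The plan is to realize $\mathfrak{M}_{\mu,\varepsilon}$ as the graph of a fixed point of a Lyapunov--Perron integral operator, exploiting that in the coordinates $(\psi,\varphi,h)$ the transverse variable $h$ is exponentially contracting while $\psi,\varphi$ (and the forcing phases $\beta t,\alpha t$) merely rotate. First I would make (\ref{ss1})--(\ref{ss3}) autonomous by adjoining $\chi_{1}=\beta t$ and $\chi_{2}=\alpha t$ with $\dot\chi_{1}=\beta$, $\dot\chi_{2}=\alpha$, obtaining a $C^{l-3}$ flow on $\mathbb{T}_{4}\times\mathbb{R}^{n-2}$ (the loss of three derivatives coming from $\Phi_{0}$, $\tilde{H}$ and the averaging changes (\ref{change1}), (\ref{change2})), and then multiply each remainder $R_{0k},R_{jk}$ by a smooth cut-off in $h$ supported in $\{\|h\|\le h_{0}\}$. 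Since $R_{01}(0,\psi,\mu)=0$, $\tilde{H}(0,\psi)=0$ and $F_{1}=\mathcal{O}(\|h\|^{2})$, the forcing of the $h$-equation at $h=0$ equals $\mu^{2}\tilde{\Phi}_{0}(\psi)\bar{g}(x_{0}(\psi),\beta t)+\mathcal{O}(\mu^{2}\varepsilon^{2}+\mu\varepsilon^{3})$; together with $R_{01}=\mathcal{O}(\|h\|^{2})$ this keeps the manifold constructed below inside $\{\|h\|<h_{0}\}$ once $\mu_{0},\varepsilon_{0}$ are small, so that it is also an integral manifold of the original (uncut) system.

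The decisive quantitative ingredient is a uniform exponential contraction for the linear part of the $h$-equation along trajectories of the reduced flow. Along any such trajectory $\dot\psi=\beta_{0}+\rho(t)$ with $\|\rho\|_{\infty}\le\delta$ and $\delta=\delta(\mu,\varepsilon,h_{0})\to0$; since $\dot\psi>0$ I would use $\psi$ as the independent variable, which turns $\dot h=H(\psi(t))h$ into $dh/d\psi=\beta_{0}^{-1}H(\psi)h-\rho[\beta_{0}(\beta_{0}+\rho)]^{-1}H(\psi)h$, a perturbation of order $\mathcal{O}(\delta)$ that is bounded \emph{uniformly in $\psi$}. Roughness of exponential dichotomies, applied to the exponentially stable system $dh/d\psi=\beta_{0}^{-1}H(\psi)h$ from Section~\ref{sec:Unperturbed-system}, then yields constants $K,\kappa>0$ — independent of $\mu,\varepsilon,\alpha$ and of the particular trajectory — with $\|\Phi_{u}(t,s)\|\le Ke^{-\kappa(t-s)}$ for $t\ge s$, where $\Phi_{u}$ is the transition matrix of $\dot h=H(\psi(t))h$ along that trajectory. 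This robustness against the drift of $\psi$ is the only place where the non-constancy of $H$ enters, and it is the essential difference with the constant-$H$ situation of \cite{Recke2011}; the remaining steps are classical integral-manifold theory \cite{Bogoliubov1961,Hale1980}.

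Next I would pose the fixed-point problem on the complete metric space $\mathcal{X}$ of functions $u\in C^{0}(\mathbb{T}_{4},\mathbb{R}^{n-2})$ with $\|u\|_{C^{0}}\le h_{0}$ and $\mathrm{Lip}\,u\le1$. For $u\in\mathcal{X}$ and $p\in\mathbb{T}_{4}$ let $p(s)$, $s\le0$, be the backward solution with $p(0)=p$ of the reduced equations (\ref{ss1})--(\ref{ss2}) (together with $\dot\chi_{1}=\beta$, $\dot\chi_{2}=\alpha$) in which $h$ is replaced by $u$, and set
\[
(\mathcal{F}u)(p)=\int_{-\infty}^{0}\Phi_{u}(0,s)\,G\bigl(u(p(s)),p(s),\mu,\varepsilon\bigr)\,ds,\qquad G:=R_{01}+\mu^{2}R_{02}+\mu^{2}\varepsilon^{2}R_{03}+\mu\varepsilon^{3}R_{04}.
\]
The integral converges because of the factor $e^{\kappa s}$ and the boundedness of the cut-off $G$, and $h(s):=\int_{-\infty}^{s}\Phi_{u}(s,\tau)G\,d\tau$ is exactly the unique solution of the $h$-equation through the slice $p$ that stays bounded on $(-\infty,0]$; hence fixed points of $\mathcal{F}$ are precisely integral-manifold functions. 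Since $h$ enters (\ref{ss1})--(\ref{ss2}) only through $\mathcal{O}(\|h\|^{2})$- or $\mathcal{O}(\mu^{2})$-small terms, the orbit $p(\cdot)$ and the matrix $\Phi_{u}(0,\cdot)$ depend on $u$ with constants growing at most like $e^{\mathcal{O}(\delta)|s|}$, while $G$ has $h$-Lipschitz constant $\mathcal{O}(h_{0}+\mu^{2})$; weighting by $e^{\kappa s}$ — so that $e^{(\kappa-\mathcal{O}(\delta))s}$ is still integrable over $(-\infty,0]$ — shows that $\mathcal{F}$ maps $\mathcal{X}$ into itself and is a contraction once $\mu_{0},\varepsilon_{0},h_{0}$ are small, which gives the unique $u$. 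Differentiating the fixed-point equation formally and invoking the standard fibre-contraction argument (the spectral gap being between rate $0$ on $\mathbb{T}_{4}$ and rate $-\kappa$ in $h$, so no further derivatives are lost) yields $u\in C^{l-3}$, and $2\pi$-periodicity in $\psi,\varphi,\beta t,\alpha t$ is inherited from that of all the data.

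Finally, the form (\ref{mani00}) and the uniform bound $M_{1}$ follow from a bootstrap inside this fixed point. Dropping the $R_{03}$- and $R_{04}$-terms leaves a subsystem in $(\psi,\beta t,h)$ alone, whose integral manifold is therefore of the form $h=\mu^{2}u_{0}(\psi,\beta t,\mu)$: the factor $\mu^{2}$ is extracted because the truncated forcing equals $\mu^{2}\tilde{\Phi}_{0}(\psi)\bar{g}(x_{0}(\psi),\beta t)$ at $h=0$ while $R_{01}=\mathcal{O}(\|h\|^{2})$ contributes only $\mathcal{O}(\mu^{4})$ along this manifold, and $u_{0}=\mathcal{O}(1)$ uniformly. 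Writing $u=\mu^{2}u_{0}+v$, the remainder $v$ solves a fixed-point equation forced by $\mu^{2}\varepsilon^{2}R_{03}+\mu\varepsilon^{3}R_{04}$ plus terms of order $\mathcal{O}(\mu^{2}\|v\|)$, so $\|v\|=\mathcal{O}(\mu\varepsilon^{3}+\mu^{2}\varepsilon^{2})$; splitting the forcing into its $R_{04}$- and $R_{03}$-parts gives $v=\varepsilon^{3}\mu\,u_{1}+\varepsilon^{2}\mu^{2}\,u_{2}$ with $u_{1},u_{2}$ bounded uniformly in $\mu,\varepsilon,\alpha$ — the uniformity resting on that of $K,\kappa$ and on the uniform bounds of the $R$-functions over $\{\|h\|\le h_{0},\ \beta_{1}\le\beta\le\beta_{2}\}$. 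For the attraction estimate (\ref{mani2}) I would write a nearby solution as $h(t)=u(\psi(t),\varphi(t),\beta t,\alpha t)+w(t)$: the equation for $w$ retains the exponential contraction with rate $\kappa$, and a second Lyapunov--Perron fixed point selects the unique base point $(\psi_{0},\varphi_{0})$ for which $w(t)\to0$, with the bound $Le^{-\kappa(t-\tau)}$. I expect the robust exponential dichotomy of the $h$-linearization under the drift of $\psi$ (second paragraph) to be the main obstacle; the other steps proceed along classical lines.
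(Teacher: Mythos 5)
Your proposal is correct and follows essentially the same route as the paper: the paper likewise passes to an autonomous system on $\mathbb{T}_{4}\times\mathbb{R}^{n-2}$, sets up the same Lyapunov--Perron operator $T(w)(\zeta,\lambda)=\int_{-\infty}^{0}\Omega(\tau,\zeta)\tilde{Q}_{0}(w(\zeta_{\tau},\lambda),\zeta_{\tau},\lambda)\,d\tau$ on a ball of Lipschitz functions over the torus, and obtains $C^{l-3}$ smoothness via the fiber contraction theorem. The only minor differences are that the paper gets the uniform exponential estimate for the perturbed $h$-linearization by citing Lemma 6.3 of \cite{Yi1993a} instead of your change-of-time-variable/roughness-of-dichotomy argument, and it extracts the structure (\ref{mani00}) by treating $\eta_{1}=\mu^{2}$, $\eta_{2}=\varepsilon^{2}\mu^{2}$, $\eta_{3}=\varepsilon^{3}\mu$ as independent parameters (using that the fixed point vanishes at $\eta=0$ and depends smoothly on $\lambda$) rather than by your bootstrap decomposition.
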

\begin{proof}
By introducing new variables $\zeta=(\psi,\varphi,\zeta_{1},\zeta_{2}),$
$\zeta_{1}=\beta t,$ $\zeta_{2}=\alpha t,$ and new parameters $\lambda=(\eta_{1},\eta_{2},\eta_{3},\mu,\varepsilon)$,
$\eta_{1}=\mu^{2},$ $\eta_{2}=\varepsilon^{2}\mu^{2},$ $\eta_{3}=\varepsilon^{3}\mu$
in system (\ref{ss1}) -- (\ref{ss3}), we obtain the following autonomous
system
\begin{eqnarray}
\frac{dh}{dt} & = & H(\psi)h+\tilde{Q}_{0}(h,\zeta,\lambda),\label{ss1-ext}\\[3mm]
\frac{d\zeta}{dt} & = & \omega_{0}+\tilde{Q}(h,\zeta,\lambda),\label{ss2-ext}
\end{eqnarray}
 where $\omega_{0}=(\beta_{0},\alpha_{0},\beta,\alpha)$ and $\tilde{Q}=(\tilde{Q}_{1},\tilde{Q}_{2},0,0),$
\begin{eqnarray*}
 &  & \tilde{Q}_{0}=R_{01}+\eta_{1}R_{02}+\eta_{2}R_{03}+\eta_{3}R_{04},\\
 &  & \tilde{Q}_{j}=\eta_{1}p_{j}^{T}\bar{g}+R_{j1}+\eta_{1}R_{j2}+\eta_{2}R_{j3}+\eta_{3}R_{j4},\,\, j=1,2.
\end{eqnarray*}
 The corresponding reduced system has the form
\begin{equation}
\frac{dh}{dt}=H(\psi)h,\quad\frac{d\zeta}{dt}=\omega_{0}.\label{eq:redsys}
\end{equation}
 By construction, this system is exponentially stable, i.e. for all
$\psi\in\mathbb{T}_{1}$ the fundamental solution $\Omega_{0}(t,\tau,\psi)$
of system $\frac{dh}{dt}=H(\beta_{0}t+\psi)h$ satisfies
\begin{equation}
\|\Omega_{0}(t,\tau,\psi)\|\le\mathcal{L}e^{-\kappa_{0}(t-\tau)},\ t\ge\tau,\label{1**}
\end{equation}
 where $\mathcal{L}\ge1$ and $\kappa_{0}>0.$ Note that matrix $H(\psi)$
and the fundamental solution $\Omega_{0}(t,\tau,\psi)$ depend only
on 
the first coordinate of vector $\zeta=(\psi,\varphi,\zeta_{1},\zeta_{2})$.

Denote $I_{\lambda_{0}}=\{\lambda:\ \|\lambda\|\le\lambda_{0}\}.$
Let $\mathcal{F}_{\rho}$ be the space of Lipschitz continuous functions
$w:\,\mathbb{T}_{4}\times I_{\lambda_{0}}\to\mathbb{R}^{n}$ such
that $\|w\|_{C}\le\rho,$ ${\rm \mathrm{Lip}_{\zeta}}\, w\le\rho$
for all $\lambda\in I_{\lambda_{0}}$, where ${\rm \mathrm{Lip}_{\zeta}}\, w$
is the Lipschitz constant of $w$ with respect to the first argument
$\zeta$.

In order to prove the existence of the invariant manifold for (\ref{ss1-ext})
-- (\ref{ss2-ext}), we consider the mapping $T:\mathcal{F}_{\rho}\to\mathcal{F}_{\rho}$
\[
T(w)(\zeta,\lambda)=\int_{-\infty}^{0}\Omega(\tau,\zeta)\tilde{Q}_{0}\left(w\left(\zeta_{\tau},\lambda\right),\zeta_{\tau},\lambda\right)d\tau,
\]
 where $\zeta_{\tau}=(\psi_{\tau},\varphi_{\tau},\zeta_{1\tau},\zeta_{2\tau})$
is the solution of (\ref{ss2-ext}) for $h=w(\zeta,\lambda)$ with
initial condition $\zeta,$ i.e.
\[
\frac{d\zeta}{dt}=\omega_{0}+\tilde{Q}(w(\zeta,\lambda),\zeta,\lambda).
\]
 $\Omega(t,\zeta)$ is the fundamental solution of system
\begin{equation}
\frac{dh}{dt}=H(\psi_{t})h,\label{lin-w}
\end{equation}
 with $\Omega(0,\zeta)=I$. As follows from \cite{Yi1993a} (Lemma
6.3), system (\ref{lin-w}) is exponentially stable as a small perturbation
of system (\ref{eq:redsys}), therefore there exist $\lambda_{0}>0$ and
$\rho_{0}>0$ such that for $\lambda\in I_{\lambda_{0}}$ and $w\in\mathcal{F}_{\rho_{0}}$
\begin{equation}
\|\Omega(t,\zeta)\|\le\mathcal{L}_{1}e^{-\kappa_{1}t},\quad t\ge0,\label{2**}
\end{equation}
 with $\mathcal{L}_{1}\ge1$ and $0<\kappa_{1}\le\kappa_{0}.$

Analogously to \cite{Recke2011} we prove that the map $T(w):\ \mathcal{F}_{\rho}\to\mathcal{F}_{\rho}$
is a contraction for small enough $\rho_{0}$ and $\lambda_{0}$.
Hence, the mapping $T(w)$ has unique fixed point $w_{0}(\zeta,\lambda)$.

For proving $C^{l-3}$ smoothness of integral manifold $w_{0}(\zeta,\lambda)$
we use the fiber contraction theorem \cite{Vanderbauwhede1989}, \cite[p. 127]{Chicone2006}.
In particular, for the proof of $C^{1}$-smoothness with respect to
$\zeta$, we introduce the set $\mathcal{F}^{1}$ of all bounded continuous
functions $\Phi$ that map $\mathbb{T}_{4}\times I_{\lambda_{0}}$
into the set of all $n\times4$ matrices. Let $\mathcal{F}_{\rho}^{1}$
denote the closed ball in $\mathcal{F}^{1}$ with radius $\rho.$
For $w\in\mathcal{F}_{\rho},$ we consider the map $T^{1}(w,\Phi):\,\mathcal{F}_{\rho}\times\mathcal{F}_{\rho}^{1}\to\mathcal{F}_{\rho}^{1}$,
which is defined as follows
\begin{eqnarray}
 &  & T^{1}(w,\Phi)(\zeta)=\int_{-\infty}^{0}\Omega(\tau,\zeta)\Biggl(\frac{\partial\tilde{Q}_{0}(w(\zeta_{\tau},\lambda),\zeta_{\tau},\lambda)}{\partial\zeta}W_{\tau}\nonumber \\
 &  & +\frac{\partial\tilde{Q}_{0}(w(\zeta_{\tau},\lambda),\zeta_{\tau},\lambda)}{\partial h}\Phi(\zeta_{\tau},\lambda)W_{\tau}+\frac{\partial H(\zeta_{\tau})}{\partial\zeta}w(\zeta_{\tau},\lambda)\Biggl)d\tau,\label{oznT1}
\end{eqnarray}
 where $\zeta_{t}$ and $W_{t}$ are solutions of the system
\begin{eqnarray}
 &  & \frac{d\zeta}{dt}=\omega_{0}+\tilde{Q}(w(\zeta,\lambda),\zeta,\lambda),\label{eq-1w}\\
 &  & \frac{dW}{dt}=\frac{\partial\tilde{Q}(w(\zeta,\lambda),\zeta,\lambda)}{\partial\zeta}W+\frac{\partial\tilde{Q}(w(\zeta,\lambda),\zeta,\lambda)}{\partial h}\Phi(\zeta,\lambda)W.\label{eq-2w}
\end{eqnarray}
 Analogously to \cite{Chicone2006,Recke2011} we prove that the map
\begin{equation}
(w,\Phi)\to(T(w),T^{1}(w,\Phi))\label{TT1}
\end{equation}
 is continuous with respect to $w$ and $\Phi$ and is a fiber contraction.
It has unique fixed point $(w_{0},w_{1})$ which is globally attracting
and bounded uniformly with respect to $\alpha\in[\alpha_{0},\infty).$
Repeating \cite[p.296]{Chicone2006} one can show that $w_{0}$ is
continuously differentiable and $Dw_{0}=w_{1}.$

The smoothness up to $C^{l-3}$ can be improved inductively. The continuous
differentiability with respect to $\lambda$ is proved analogously.

Since the invariant manifold $h=w(\zeta,\lambda)$ for $\eta_{1}=\eta_{2}=\eta_{3}=0$
equals to zero $h=0$, it can be represented as
\[
h=\eta_{1}w_{0}(\psi,\zeta_{1},\mu)+\eta_{2}w_{1}(\psi,\varphi,\zeta_{1},\zeta_{2},\lambda)+\eta_{3}w_{2}(\psi,\varphi,\zeta_{1},\zeta_{2},\lambda).
\]
 Note that $w_{0}$ does not depend on $\zeta_{2}$, $\eta_{2},\eta_{3}$,
and $\varepsilon$, since system (\ref{ss1-ext}) -- (\ref{ss2-ext})
is independent on $\zeta_{2}$ for $\eta_{2}=\eta_{3}=\varepsilon=0$.
Taking into account the definitions of $\eta_{1},$ $\eta_{2}$, $\eta_{3},$
and $\zeta_{1}$, $\zeta_{2}$, we obtain that the invariant manifold
of (\ref{ss1}) -- (\ref{ss3}) has the form (\ref{mani00}).

The remaining proof of (\ref{mani2}) is analogous to the proof of
Lemma 5.1 in \cite{Recke2011}.
\end{proof}

\begin{remark}
{\rm If $r_{2}\equiv0$ in system (\ref{av010}) then $R_{04}\equiv0,$
$R_{14}\equiv R_{24}\equiv0$ in system (\ref{ss1}) -- (\ref{ss3})
and $u_{1}\equiv0$ in the expression for the integral manifold (\ref{mani00}).
This corresponds to the non-degenerate case described by (\ref{av01})
with non-vanishing averaged term $g_{1}(x_{1},\beta t)$.}
\end{remark}

\section{Investigation of the system on the manifold\label{sec:Investigation-of-the}}

Substituting the expression for the invariant manifold (\ref{mani00})
into equations (\ref{ss1}) -- (\ref{ss3}), we obtain the system
on the manifold
\begin{eqnarray}
 &  & \frac{d\psi}{dt}=\beta_{0}+\mu^{2}p_{1}^{T}(\psi)\bar{g}(x_{0}(\psi),\beta t)+\mu^{4}S_{11}(\psi,\beta t,\mu)+\nonumber \\[2mm]
 &  & \hspace{35mm}+\varepsilon^{2}\mu^{2}S_{12}(\psi,\varphi,\beta t,\alpha t,\mu,\varepsilon)+\varepsilon^{3}\mu S_{13}(\psi,\varphi,\beta t,\alpha t,\mu,\varepsilon),\label{m1}\\[2mm]
 &  & \frac{d\varphi}{dt}=\alpha_{0}+\mu^{2}p_{2}^{T}(\psi)\bar{g}(x_{0}(\psi),\beta t)+\mu^{4}S_{21}(\psi,\beta t,\mu)+\nonumber \\[3mm]
 &  & \hspace{35mm}+\varepsilon^{2}\mu^{2}S_{22}(\psi,\varphi,\beta t,\alpha t,\mu,\varepsilon)+\varepsilon^{3}\mu S_{23}(\psi,\varphi,\beta t,\alpha t,\mu,\varepsilon),\label{m2}
\end{eqnarray}
 where $C^{l-3}$-smooth functions $S_{jl}$ are $2\pi$-periodic
in $\psi,\varphi,\beta t,$ and $\alpha t.$

Now we will use the closeness of the frequencies $\beta_{0}$ and
$\beta$
\[
\beta-\beta_{0}=\mu^{2}\Delta
\]
 and obtain conditions for the locking of the variable $\psi$ to
the external frequency $\beta$. For this, we introduce new variable
$\psi_{1}$ in (\ref{m1}) -- (\ref{m2}) according to the formula
\[
\psi=\beta t+\psi_{1}.
\]
 In the obtained system
\begin{eqnarray}
 &  & \frac{d\psi_{1}}{dt}=-\mu^{2}\Delta+\mu^{2}p_{1}^{T}(\beta t+\psi_{1})\bar{g}(x_{0}(\beta t+\psi_{1}),\beta t)+\mu^{4}S_{11}(\beta t+\psi_{1},\beta t,\mu)+\nonumber \\[2mm]
 &  & \hspace{25mm}+\varepsilon^{2}\mu^{2}S_{12}(\beta t+\psi_{1},\varphi,\beta t,\alpha t,\mu,\varepsilon)+\varepsilon^{3}\mu S_{13}(\beta t+\psi_{1},\varphi,\beta t,\alpha t,\mu,\varepsilon),\label{eq:theta-1}\\[2mm]
 &  & \frac{d\varphi}{dt}=\alpha_{0}+\mu^{2}p_{2}^{T}(\beta t+\psi_{1})\bar{g}(x_{0}(\beta t+\psi_{1}),\beta t)+\mu^{4}S_{21}(\beta t+\psi_{1},\beta t,\mu)+\nonumber \\[3mm]
 &  & \hspace{25mm}+\varepsilon^{2}\mu^{2}S_{22}(\beta t+\psi_{1},\varphi,\beta t,\alpha t,\mu,\varepsilon)+\varepsilon^{3}\mu S_{23}(\beta t+\psi_{1},\varphi,\beta t,\alpha t,\mu,\varepsilon)\label{eq:phi-1}
\end{eqnarray}
 the variable $\psi_{1}$ is slow and one can perform the following
averaging transformation with respect to $\beta t$
\[
\psi_{1}=\psi_{2}+\frac{\mu^{2}}{\beta}\int_{0}^{\beta t}[p_{1}^{T}(\xi+\psi_{1})\bar{g}(x_{0}(\xi+\psi_{1}),\xi)-\bar{G}(\psi_{1})]d\xi,
\]

\[
\varphi=\varphi_{2}+\frac{\mu^{2}}{\beta}\int_{0}^{\beta t}[p_{2}^{T}(\xi+\psi_{1})\bar{g}(x_{0}(\xi+\psi_{1}),\xi)-\bar{S}_{21}(\psi_{1})]d\xi,
\]
 where
\[
G(\psi_{1})=\frac{1}{2\pi}\int_{0}^{2\pi}p_{1}^{T}(\xi+\psi_{1})\bar{g}(x_{0}(\xi+\psi_{1}),\xi)d\xi,
\]

\[
\bar{S}_{21}(\psi_{1})=\frac{1}{2\pi}\int_{0}^{2\pi}p_{2}^{T}(\xi+\psi_{1})\bar{g}(x_{0}(\xi+\psi_{1}),\xi)d\xi.
\]
 After this transformation system (\ref{eq:theta-1}) -- (\ref{eq:phi-1-1})
takes the form
\begin{eqnarray}
 &  & \frac{d\psi_{2}}{dt}=-\Delta\mu^{2}+\mu^{2}G(\psi_{2})+\mu^{4}\tilde{S}_{11}(\psi_{2},\beta t,\mu)\label{mm1}\\[2mm]
 &  & \,\,\,\,\,\,\,\,\,\,\,\,\,\,\,\,\,\,\,+\varepsilon^{2}\mu^{2}\tilde{S}_{12}(\psi_{2},\varphi_{2},\beta t,\alpha t,\mu,\varepsilon)+\varepsilon^{3}\mu\tilde{S}_{13}(\psi_{2},\varphi_{2},\beta t,\alpha t,\mu,\varepsilon),\nonumber \\
 &  & \frac{d\varphi_{2}}{dt}=\alpha_{0}+\mu^{2}\bar{S}_{21}(\psi_{2})+\mu^{4}\tilde{S}_{21}(\psi_{2},\beta t,\mu)\label{mm2}\\
 &  & \,\,\,\,\,\,\,\,\,\,\,\,\,\,\,\,\,\,\,+\varepsilon^{2}\mu^{2}\tilde{S}_{22}(\psi_{2},\varphi_{2},\beta t,\alpha t,\mu,\varepsilon)+\varepsilon^{3}\mu\tilde{S}_{23}(\psi_{2},\varphi_{2},\beta t,\alpha t,\mu,\varepsilon),
\end{eqnarray}
 where the functions in the right hand side are $C^{l-3}$-smooth
and periodic in $\psi_{2},\varphi_{2},\beta t,$ and $\alpha t.$

The corresponding averaged system allows splitting off of the dynamics
for $\psi_{2}$ variable
\begin{equation}
\frac{d\psi_{2}}{dt}=\mu^{2}\left(-\Delta+G(\psi_{2})\right),\label{mm01}
\end{equation}
 which can be described completely. In particular, it has equilibriums,
which are solutions of the equation
\begin{equation}
\Delta=G(\xi).\label{alg}
\end{equation}
 Such equilibriums exist if
\[
\Delta=\frac{\beta-\beta_{0}}{\mu^{2}}\in[G^{-},G^{+}],
\]
 where $G^{-}$ and $G^{+}$ are minimum and maximum of the periodic
function $G(\xi)$, respectively.

If $\Delta\in[G^{-},G^{+}]$ is a regular value of the map $G$, then
there exist even number of solutions $\xi=\vartheta_{j}^{0}$, $j=1,\dots,2\tilde{N}$,
of equation (\ref{alg}) such that $G'(\vartheta_{j}^{0})\neq0.$
In general, $\tilde{N}$ depends on $\Delta$. Since the signs of
every two sequential values $G'(\vartheta_{j}^{0})$ and $G'(\vartheta_{j+1}^{0})$
are opposite, the half of these equilibriums are stable and another
half is unstable. Every such equilibrium corresponds to an integral
manifold of (\ref{mm1})--(\ref{mm2}) with the same stability properties.

\begin{lemma}
\label{lemma4} Assume $\Delta\in[G^{-},G^{+}]$ such that (\ref{alg})
has $2\tilde{N}$ nondegenerate solutions $\xi=\vartheta_{j}^{0}$,
$j=1,\dots,2\tilde{N}$. Then there exist $\mu_{0}>0$ and $c_{0}>0$
such that for all $0<\mu\le\mu_{0}$ and $\varepsilon\le c_{0}\mu^{1/3}$
system (\ref{mm1}) -- (\ref{mm2}) has $2\tilde{N}$ integral manifolds
\begin{equation}
\Pi_{j}=\left\{ \left(\psi_{2},\varphi_{2},t\right):\ \psi_{2}=\vartheta_{j}^{0}+v_{j}(\varphi_{2},\beta t,\alpha t,\mu,\varepsilon),\,\varphi_{2}\in\mathbb{T}_{1},t\in\mathbb{R}\right\} ,\label{ma}
\end{equation}
 where
\[
v_{j}=\mu^{2}v_{j0}(\beta t,\mu)+\varepsilon^{2}v_{j1}(\varphi_{2},\beta t,\alpha t,\mu,\varepsilon)+\frac{\varepsilon^{3}}{\mu}v_{j2}(\varphi_{2},\beta t,\alpha t,\mu,\varepsilon),
\]
 with $C^{l-3}$ smooth, periodic in $\varphi_{2},\beta t,\alpha t$
functions $v_{jk}$, such that $\left\Vert v_{jk}\right\Vert _{C^{l-3}}\le M_{3}$
with the constant $M_{3}$ independent on $\mu,\varepsilon,$ and
$\alpha\ge\alpha_{0}.$

The manifolds $\Pi_{2k}$, $k=1,...,\tilde{N},$ are exponentially
stable in the following sense: there exists $\delta_{0}$ such that
if $|\psi_{20}-\vartheta_{2k}^{0}|\le\delta_{0}$ and $\varphi_{0}\in\mathbb{T}_{1}$,
then there exists a unique $\varphi_{01}$ such that for $t\ge t_{0}$
the following inequality holds
\begin{multline}
|\psi_{2}(t,t_{0},\psi_{20},\varphi_{0})-\psi_{2}(t,t_{0},\vartheta_{2k}^{0}+v_{2k}(\varphi_{01},\beta t_{0},\alpha t_{0},\mu,\varepsilon),\varphi_{01})|\\
+|\varphi_{2}(t,t_{0},\psi_{20},\varphi_{0})-\varphi_{2}(t,t_{0},\vartheta_{2k}^{0}+v_{2k}(\varphi_{01},\beta t_{0},\alpha t_{0},\mu,\varepsilon),\varphi_{01})|\\
\le\mathcal{L}_{2}e^{-\mu^{2}\kappa_{2}(t-t_{0})}\left(|\varphi_{0}-\varphi_{01}|+|\psi_{20}-\vartheta_{2k}^{0}-v_{2k}(\varphi_{01},\beta t_{0},\alpha t_{0},\mu,\varepsilon)|\right),\label{sta}
\end{multline}
 where $\mathcal{L}_{2}\ge1$ and $\kappa_{2}>0$ is independent on
$\alpha,\mu$, and $\varepsilon.$

The manifolds $\Pi_{2k-1}$, $k=1,...,\tilde{N},$ are exponentially
unstable in the following sense: there exists $\delta_{0}$ such that
if $|\psi_{20}-\vartheta_{2k-1}^{0}|\le\delta_{0}$ and $\varphi_{0}\in\mathbb{T}_{1}$,
then there exists a unique $\varphi_{01}$ such that for $t\le t_{0}$
the following inequality holds
\begin{multline}
|\psi_{2}(t,t_{0},\psi_{20},\varphi_{0})-\psi_{2}(t,t_{0},\vartheta_{2k-1}^{0}+v_{2k-1}(\varphi_{01},\beta t_{0},\alpha t_{0},\mu,\varepsilon),\varphi_{01})|\\
+|\varphi_{2}(t,t_{0},\psi_{20},\varphi_{0})-\varphi_{2}(t,t_{0},\vartheta_{2k-1}^{0}+v_{2k-1}(\varphi_{01},\beta t_{0},\alpha t_{0},\mu,\varepsilon),\varphi_{01})|\\
\le\mathcal{L}_{3}e^{\mu^{2}\kappa_{3}(t-t_{0})}\left(|\varphi_{0}-\varphi_{01}|+|\psi_{20}-\vartheta_{2k-1}^{0}-v_{2k-1}(\varphi_{01},\beta t_{0},\alpha t_{0},\mu,\varepsilon)|\right),\label{sta2-}
\end{multline}
 where $\mathcal{L}_{3}\ge1$ and $\kappa_{3}>0$ is independent on
$\alpha,\mu$, and $\varepsilon.$
\end{lemma}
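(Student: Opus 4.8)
The plan is to follow the strategy of the proof of Lemma~\ref{lemma2}, with the role of the contracting fibre $h$ now played by the one-dimensional direction $\psi_2$ near a nondegenerate equilibrium $\vartheta_j^0$ of the averaged equation (\ref{mm01}). Fix such a $\vartheta_j^0$; since it is nondegenerate, $a_j:=G'(\vartheta_j^0)\neq0$, and because consecutive zeros of $G-\Delta$ on the circle are separated by extrema of $G$, the signs of $a_j$ alternate, so exactly half of the $\vartheta_j^0$ have $a_j<0$ and half $a_j>0$. First I would set $\psi_2=\vartheta_j^0+\eta$, rescale time by $\tau=\mu^2 t$, and use $G(\vartheta_j^0)=\Delta$ to bring (\ref{mm1})--(\ref{mm2}) into the form
\begin{align*}
\frac{d\eta}{d\tau} &= a_j\eta+\mathcal{O}(\eta^2)+\mu^2\hat{S}_{11}+\varepsilon^2\hat{S}_{12}+\frac{\varepsilon^3}{\mu}\hat{S}_{13},\\
\frac{d\varphi_2}{d\tau} &= \frac{\alpha_0}{\mu^2}+\bar{S}_{21}(\vartheta_j^0+\eta)+\mu^2\hat{S}_{21}+\varepsilon^2\hat{S}_{22}+\frac{\varepsilon^3}{\mu}\hat{S}_{23},
\end{align*}
where $\hat{S}_{mk}$ denotes $\tilde{S}_{mk}$ with $\psi_2$ replaced by $\vartheta_j^0+\eta$ (still $C^{l-3}$ and $2\pi$-periodic in $\varphi_2,\beta t,\alpha t$). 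Then, exactly as in Lemma~\ref{lemma2}, I would adjoin $\zeta_1=\beta t$, $\zeta_2=\alpha t$ as fast angular variables and introduce the independent small parameters $\nu_1=\mu^2$, $\nu_2=\varepsilon^2$, $\nu_3=\varepsilon^3/\mu$, obtaining an autonomous system whose normal direction $\eta$ is one-dimensional and hyperbolic.

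For $\nu_1=\nu_2=\nu_3=0$ and after dropping the $\mathcal{O}(\eta^2)$ term the $\eta$-equation is $d\eta/d\tau=a_j\eta$, which has an exponential dichotomy with rate $|a_j|>0$ that does not depend on $\mu,\varepsilon,\alpha$. The assumption $\varepsilon\le c_0\mu^{1/3}$ makes $\nu_3=\varepsilon^3/\mu\le c_0^3$ and, together with $\mu\le\mu_0$, makes $\nu_1,\nu_2$ as small as required, so the full $\eta$-equation is a uniformly small $C^1$-perturbation (uniform in the fast angles and in $\alpha$) of the hyperbolic linear one. Repeating the Lyapunov--Perron integral-operator construction of Lemma~\ref{lemma2} --- contraction mapping for existence, fibre contraction theorem for $C^{l-3}$-smoothness (reversing time when $a_j>0$), cf. \cite{Yi1993a,Vanderbauwhede1989,Chicone2006,Recke2011} --- would yield a unique $C^{l-3}$ integral manifold $\eta=\tilde{v}_j(\varphi_2,\zeta_1,\zeta_2,\nu_1,\nu_2,\nu_3)$, bounded in $C^{l-3}$ uniformly in all parameters and in $\alpha\ge\alpha_0$.

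Since $\tilde{v}_j\equiv0$ when $\nu_1=\nu_2=\nu_3=0$ and $\tilde{v}_j$ is $C^{l-3}$ in $(\nu_1,\nu_2,\nu_3)$, a Hadamard-type decomposition gives $\tilde{v}_j=\nu_1 v_{j0}+\nu_2 v_{j1}+\nu_3 v_{j2}$ with $C^{l-3}$ coefficients; moreover, for $\varepsilon=0$ the $\eta$-equation involves only $\eta,\zeta_1,\mu$ (and the whole reduced system is independent of $\zeta_2$), so $v_{j0}=v_{j0}(\beta t,\mu)$. Re-expressing $\nu_i$ in terms of $\mu,\varepsilon$ gives exactly the form of $v_j$ claimed in (\ref{ma}). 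The stability statements would then follow as for (\ref{mani2}) in Lemma~\ref{lemma2}: on the manifold the normal linearisation is $d\eta/d\tau=(a_j+o(1))\eta$ uniformly, hence it retains an exponential dichotomy with rate $\tfrac{1}{2}|a_j|$ in $\tau$, i.e. rate $\tfrac{1}{2}\mu^2|a_j|$ in $t$; the standard asymptotic-phase argument on a normally hyperbolic manifold produces, for each nearby initial datum, the unique $\varphi_{01}$ and the estimates (\ref{sta}) for the manifolds $\Pi_{2k}$ ($a_j<0$) and (\ref{sta2-}) for $\Pi_{2k-1}$ ($a_j>0$), with $\kappa_2=\kappa_3=\tfrac{1}{2}\min_j|a_j|$ and $\mathcal{L}_2,\mathcal{L}_3\ge1$ independent of $\mu,\varepsilon,\alpha$.

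The main obstacle is precisely this last uniformity: the hyperbolicity rate in the original time is only $\mathcal{O}(\mu^2)$, and the perturbation $\varepsilon^3\mu\,\tilde{S}_{13}$ is only one power of $\mu$ smaller than the leading $\mu^2$-term, so a naive estimate would lose the uniformity of $\kappa_2,\kappa_3$. Rescaling time by $\mu^2$ is what saves the argument: it turns the rate into the $\alpha$-, $\mu$-, $\varepsilon$-independent constant $|a_j|$ and the dangerous term into $(\varepsilon^3/\mu)\hat{S}_{13}$, which the hypothesis $\varepsilon\le c_0\mu^{1/3}$ keeps small. Uniformity in $\alpha$ is automatic here, unlike in Lemma~\ref{lemma2}, because the normal direction is one-dimensional with the \emph{constant} coefficient $a_j$ rather than the periodic matrix $H(\psi)$; and the fast frequencies $\alpha_0/\mu^2,\beta/\mu^2,\alpha/\mu^2$ appearing in the angular equations do not enter the dichotomy estimates, only the angular flow along which the integral operator is taken.
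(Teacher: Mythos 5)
Your proposal is correct and follows essentially the same route as the paper's proof: localization $\psi_2=\vartheta_j^0+\psi_3$, time rescaling $\tau=\mu^2 t$ with the parameters $\mu^2$, $\varepsilon^2$, $\varepsilon^3/\mu$ treated as independent small quantities, a Lyapunov--Perron integral operator (integrating over $[0,\infty)$ or $(-\infty,0]$ according to the sign of $G'(\vartheta_j^0)$, i.e.\ your time reversal), the fiber contraction theorem for $C^{l-3}$ smoothness, and the decomposition of the manifold function into the three parameter contributions. Your identification of the role of $\varepsilon\le c_0\mu^{1/3}$ and of the $\mu^2$-rescaling in securing uniform hyperbolicity, and your asymptotic-phase argument for (\ref{sta}) and (\ref{sta2-}), match what the paper does (the latter by reference to Lemma 6.1 of \cite{Recke2011}).
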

\begin{proof}
Let us consider a neighborhood of the point $\psi_{2}=\vartheta_{k}^{0}$.
Setting $\zeta_{1}=\beta t,$ $\zeta_{2}=\alpha t$, new time $\tau=\mu^{2}t$,
and new parameters $\eta_{1}=\mu^{2},$ $\eta_{2}=\varepsilon^{2}$,
$\eta_{3}=\varepsilon^{3}/\mu$, $\chi=1/\mu^{2}$ in (\ref{mm1})--(\ref{mm2})
we change the variables $\psi_{2}=\vartheta_{k}^{0}+\psi_{3}$ and
obtain the following autonomous system on $4$-dimensional torus $\mathbb{T}_{4}:$
\begin{eqnarray}
 &  & \frac{d\psi_{3}}{d\tau}=G'(\vartheta_{k}^{0})\psi_{3}+\bar{G}_{2}(\psi_{3})\psi_{3}^{2}+\eta_{1}\tilde{S}_{11}(\vartheta_{k}^{0}+\psi_{3},\zeta_{1},\mu)\nonumber \\
 &  & \hspace{5mm}+\eta_{2}\tilde{S}_{12}(\vartheta_{k}^{0}+\psi_{3},\varphi_{2},\zeta_{1},\zeta_{2},\mu,\varepsilon)+\eta_{3}\tilde{S}_{13}(\vartheta_{k}^{0}+\psi_{3},\varphi_{2},\zeta_{1},\zeta_{2},\mu,\varepsilon),\label{mm1j}\\
 &  & \frac{d\varphi_{2}}{d\tau}=\alpha_{0}\chi+\bar{S}_{21}(\vartheta_{k}^{0}+\psi_{3},\mu)+\eta_{1}\tilde{S}_{21}(\vartheta_{k}^{0}+\psi_{3},\zeta_{1},\mu)\nonumber \\
 &  & \hspace{5mm}+\eta_{2}\tilde{S}_{22}(\vartheta_{k}^{0}+\psi_{3},\varphi_{2},\zeta_{1},\zeta_{2},\mu,\varepsilon)+\eta_{3}\tilde{S}_{23}(\vartheta_{k}^{0}+\psi_{3},\varphi_{2},\zeta_{1},\zeta_{2},\mu,\varepsilon),\label{mm2j}\\
 &  & \frac{d\zeta_{1}}{d\tau}=\beta\chi,\qquad\frac{d\zeta_{2}}{d\tau}=\alpha\chi,\label{mm3j}
\end{eqnarray}
 where $\bar{G}_{2}(\psi_{3})\psi_{3}^{2}:=G(\vartheta_{k}^{0}+\psi_{3})-G(\vartheta_{k}^{0})-G'(\vartheta_{k}^{0})\psi_{3}$.

In the function space $C^{l-3}\left(\mathbb{T}_{3}\times I_{\lambda_{0}}\right)$
of bounded together with their $l-3$ derivatives functions $w(\varphi_{2},\zeta_{1},\zeta_{2},\lambda)$
defined on $(\varphi_{2},\zeta_{1},\zeta_{2})\in\mathbb{T}_{3},$
$\lambda=(\eta_{1},\eta_{2},\eta_{1},\mu,\varepsilon)\in I_{\lambda_{0}}=\{\lambda:\ \|\lambda\|\le\lambda_{0}\},$
we introduce the mapping
\[
T_{k}(w)=-\int_{0}^{\infty}\exp\left(-G'(\vartheta_{k}^{0})\xi\right)Q_{4}(w(\varphi_{2\xi},\zeta_{1\xi},\zeta_{2\xi},\lambda),\varphi_{2\xi},\zeta_{1\xi},\zeta_{2\xi},\lambda)d\xi
\]
 if $G'(\vartheta_{k}^{0})>0,$ and
\[
T_{k}(w)=\int_{-\infty}^{0}\exp\left(-G'(\vartheta_{k}^{0})\xi\right)Q_{4}(w(\varphi_{2\xi},\zeta_{1\xi},\zeta_{2\xi},\lambda),\varphi_{2\xi},\zeta_{1\xi},\zeta_{2\xi},\lambda)d\xi
\]
 for $G'(\vartheta_{k}^{0})<0$. Here $Q_{4}$ is the right hand side
of (\ref{mm1j}) with exception of the linear term, $\varphi_{2\xi}=\varphi_{2}(\xi,\varphi,\zeta_{1},\zeta_{2},\lambda)$,
$\zeta_{1\xi}=\beta\xi+\zeta_{1}$, $\zeta_{2\xi}=\alpha\xi+\zeta_{2}$
is the solution of (\ref{mm2j}) -- (\ref{mm3j}) for $\psi_{3}=w(\varphi_{2},\zeta_{1},\zeta_{2},\lambda)$.

Analogously to the proof of Lemma \ref{lemma2} (see also \cite{Recke2011}),
we apply the fiber contraction theorem and show that there exists
a unique fixed point
\begin{equation}
w_{k}=\eta_{1}v_{k1}(\zeta_{1},\lambda)+\eta_{2}v_{k2}(\varphi_{2},\zeta_{1},\zeta_{2},\lambda)+\eta_{3}v_{k3}(\varphi_{2},\zeta_{1},\zeta_{2},\lambda)\label{eq:w}
\end{equation}
 of $T_{k}(w)$ in the neighborhood of $(0,0)\in C^{l-3}(\mathbb{T}_{3}\times I_{\lambda_{0}})$.
Functions in right-hand side of (\ref{eq:w}) are $C^{l-3}$ smooth
and $2\pi$-periodic in $\varphi_{2},\zeta_{1},\zeta_{2},$ such that
$\|v_{kj}\|_{C^{l-4}}\le M_{2},$ where positive constant $M_{2}$
does not depend on $\lambda,$ $\chi\ge\chi_{0}$ and $\alpha\ge\alpha_{0}.$

Respectively, there exist $\mu_{0}>0$ and $c_{0}>0$ such that for
all $0<\mu\le\mu_{0}$ and $\varepsilon\le c_{0}\mu^{1/3}$ the system
(\ref{mm1j}) -- (\ref{mm3j}) possesses $2\tilde{N}$ integral manifolds
(\ref{ma}).

The proof of the inequalities (\ref{sta}) and (\ref{sta2-}) is analogous
to Lemma 6.1 in \cite{Recke2011}.
\end{proof}
\begin{corollary}
\label{cor6} Under the conditions of lemma \ref{lemma4}, system
(\ref{m1}) -- (\ref{m2}) has $2\tilde{N}$ integral manifolds
\begin{equation}
\mathcal{P}_{j}=\{(\beta t+\vartheta_{j}^{0}+\tilde{v}_{j}(\varphi,\beta t,\alpha t,\mu,\varepsilon),\varphi,t):\ \varphi\in\mathbb{T}_{1},t\in\mathbb{R}\},\label{ma3}
\end{equation}
 where $C^{l-3}$-smooth function
\begin{equation}
\tilde{v}_{j}=\mu^{2}\tilde{v}_{j0}(\beta t,\mu)+\varepsilon^{2}\tilde{v}_{j1}(\varphi,\beta t,\alpha t,\mu,\varepsilon)+\frac{\varepsilon^{3}}{\mu}\tilde{v}_{j2}(\varphi,\beta t,\alpha t,\mu,\varepsilon),
\label{vj}
\end{equation}
 is $2\pi$-periodic in $\varphi,\beta t,$ and $\alpha t$. Manifolds
corresponding to $j=2k,k=1,...,\tilde{N},$ are exponentially stable
for $t\to+\infty$ and manifolds corresponding to $j=2k-1,k=1,...,\tilde{N},$
are exponentially stable for $t\to-\infty.$
\end{corollary}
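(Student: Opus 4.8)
The plan is to obtain Corollary \ref{cor6} from Lemma \ref{lemma4} by transporting the manifolds $\Pi_j$ backwards through the two changes of variables that led from system (\ref{m1})--(\ref{m2}) to system (\ref{mm1})--(\ref{mm2}): first the affine substitution $\psi=\beta t+\psi_1$, which produced (\ref{eq:theta-1})--(\ref{eq:phi-1}), and then the near-identity averaging transformation
\[
\psi_1=\psi_2+\frac{\mu^2}{\beta}\int_0^{\beta t}\bigl[p_1^T(\xi+\psi_1)\bar g(x_0(\xi+\psi_1),\xi)-G(\psi_1)\bigr]\,d\xi,\qquad\varphi=\varphi_2+\frac{\mu^2}{\beta}\int_0^{\beta t}\bigl[\cdots\bigr]\,d\xi,
\]
which produced (\ref{mm1})--(\ref{mm2}). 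Both maps are $C^{l-3}$ in all arguments and $2\pi$-periodic in $\beta t$ and $\alpha t$, and the averaging map differs from the identity by a term of order $\mu^2$ whose derivatives are bounded uniformly in $\alpha\ge\alpha_0$, $\mu$, and $\varepsilon$, since the integrands are bounded periodic functions; hence, after shrinking $\mu_0$ if necessary, it is a diffeomorphism for $0<\mu\le\mu_0$, with inverse of the same structure, and it maps integral manifolds of (\ref{mm1})--(\ref{mm2}) onto integral manifolds of (\ref{m1})--(\ref{m2}).

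Concretely, I would substitute the graph (\ref{ma}), $\psi_2=\vartheta_j^0+v_j(\varphi_2,\beta t,\alpha t,\mu,\varepsilon)$, into the averaging transformation, solve the resulting relation for $\psi_1$ as a function of $(\varphi_2,\beta t,\alpha t,\mu,\varepsilon)$ by the implicit function theorem, invert the near-identity relation between $\varphi$ and $\varphi_2$ to eliminate $\varphi_2$ in favour of $\varphi$, and finally add $\beta t$; this yields $\mathcal{P}_j$ in the form (\ref{ma3}). To see that $\tilde v_j$ has the three-term shape (\ref{vj}), I would expand the integrand of the averaging transformation about $\psi_1=\vartheta_j^0$: its leading, $\psi_1$-independent part integrates to a function of $\beta t$ only, which together with the term $\mu^2 v_{j0}(\beta t,\mu)$ from Lemma \ref{lemma4} contributes $\mu^2\tilde v_{j0}(\beta t,\mu)$; the remaining corrections, which alone produce $\varphi$- and $\alpha t$-dependence, carry an extra factor $\mu^2$ and, using $\varepsilon\le c_0\mu^{1/3}$ together with the decomposition $v_j=\mu^2 v_{j0}+\varepsilon^2 v_{j1}+(\varepsilon^3/\mu)v_{j2}$, can be reallocated among the $\mu^2\tilde v_{j0}$, $\varepsilon^2\tilde v_{j1}$ and $(\varepsilon^3/\mu)\tilde v_{j2}$ terms. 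The uniform $C^{l-3}$ bounds on the $\tilde v_{jk}$, with constant independent of $\mu$, $\varepsilon$ and $\alpha\ge\alpha_0$, follow from the corresponding bounds in Lemma \ref{lemma4} and from the boundedness, with all derivatives, of the transformation, which does not involve $\alpha$.

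For the stability statement I would note that the composite change of variables $(\psi_2,\varphi_2)\mapsto(\psi,\varphi)$ is, for each fixed $t$, a bi-Lipschitz diffeomorphism whose own Lipschitz constant and that of its inverse are bounded uniformly in $t$, $\mu$, $\varepsilon$, and $\alpha$, being $\mu^2$-close to the identity in $C^1$. Consequently the exponential estimates (\ref{sta}) for $\Pi_{2k}$ and (\ref{sta2-}) for $\Pi_{2k-1}$ carry over to $\mathcal{P}_{2k}$ and $\mathcal{P}_{2k-1}$ after enlarging the constants $\mathcal{L}_2,\mathcal{L}_3$, the exponential rates $\mu^2\kappa_2,\mu^2\kappa_3$ being preserved, and the unique asymptotic phase $\varphi_{01}$ is transported through the diffeomorphism; hence $\mathcal{P}_{2k}$ is exponentially stable as $t\to+\infty$ and $\mathcal{P}_{2k-1}$ as $t\to-\infty$.

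The step I expect to require genuine care, rather than routine substitution, is precisely the verification that the back-substitution preserves the three-term structure (\ref{vj}): one must track which corrections depend on $\varphi$ and $\alpha t$, confirm that they are small enough --- in the exact sense allowed by $\varepsilon\le c_0\mu^{1/3}$ --- to be absorbed into the $\varepsilon^2$ and $\varepsilon^3/\mu$ terms, and at the same time check that none of these manipulations spoils the uniformity of the bounds and exponential rates in $\alpha$, which is where the $\alpha$-independence assertions of Lemmas \ref{lemma2} and \ref{lemma4} are used.
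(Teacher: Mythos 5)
Your proposal is correct and follows essentially the same route the paper intends: Corollary \ref{cor6} is obtained from Lemma \ref{lemma4} simply by undoing the shift $\psi=\beta t+\psi_{1}$ and the near-identity ($\mu^{2}$-close, $\alpha$-independent) averaging transformation, which maps the graphs $\Pi_{j}$ to graphs over $\varphi$ of the form (\ref{ma3})--(\ref{vj}) and transfers the exponential estimates (\ref{sta}), (\ref{sta2-}) with only enlarged constants. The paper gives no separate proof, treating this back-substitution as immediate, and your elaboration (implicit function theorem for $\psi_{1}$, elimination of $\varphi_{2}$, bookkeeping of the $\mu^{2}$, $\varepsilon^{2}$, $\varepsilon^{3}/\mu$ terms) fills in exactly that step correctly.
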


Note that the expressions for the integral manifolds in the nondegenerate
case, i.e. $g_{1}\not\equiv0$, can be simplified. In particular,
in this case $\tilde{S}_{13}\equiv0$ and $\tilde{S}_{23}\equiv0$
in (\ref{mm1}) -- (\ref{mm2}) as well as $S_{13}\equiv0$ and $S_{23}\equiv0$
in system (\ref{m1}) -- (\ref{m2}), since $R_{04}\equiv0,$ $R_{14}\equiv R_{24}\equiv0$
in system (\ref{ss1}) -- (\ref{ss3}). The following corollary gives
these expressions.

\begin{corollary}
\label{cor66} Let $S_{13}\equiv S_{23}\equiv0$. Assume $\Delta\in[G^{-},G^{+}]$
such that (\ref{alg}) has $2\tilde{N}$ nondegenerate solutions $\xi=\vartheta_{j}^{0}$,
$j=1,\dots,2\tilde{N}$. Then there exist $\mu_{0}>0$ and $c_{0}>0$
such that for all $0<\mu\le\mu_{0}$ and $0\le\varepsilon\le c_{0}$
system (\ref{m1}) -- (\ref{m2}) has $2\tilde{N}$ integral manifolds
\begin{equation}
\mathcal{R}_{j}=\{(\beta t+\vartheta_{j}^{0}+\tilde{v}_{j}(\varphi,\beta t,\alpha t,\mu,\varepsilon),\varphi,t):\ \varphi\in\mathbb{T}_{1},t\in\mathbb{R}\},\label{ma33}
\end{equation}
 where
\[
v_{j}=\mu^{2}v_{j0}(\beta t,\mu)+\varepsilon^{2}v_{j1}(\varphi_{2},\beta t,\alpha t,\mu,\varepsilon),
\]
 with $C^{l-2}$ smooth, periodic in $\varphi_{2},\beta t,\alpha t$
functions $v_{jk}$, such that $\left\Vert v_{jk}\right\Vert _{C^{l-2}}\le M_{3}$
with the constant $M_{3}$ independent on $\alpha,\mu,\varepsilon.$
The manifolds $\mathcal{R}_{2k}$, $k=1,...,\tilde{N},$ are exponentially
stable in the sense of inequality (\ref{sta}). The manifolds $\mathcal{R}_{2k-1}$,
$k=1,...,\tilde{N},$ are exponentially unstable in the sense of inequality
(\ref{sta2-}).
\end{corollary}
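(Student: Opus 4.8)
The plan is to repeat, with minor simplifications, the proof scheme of Lemma~\ref{lemma4} and Corollary~\ref{cor6}, using the hypothesis $S_{13}\equiv S_{23}\equiv0$ to remove both the coupling constraint $\varepsilon\le c_0\mu^{1/3}$ and one order of loss of differentiability.

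First I would carry out the same reduction as in Section~\ref{sec:Investigation-of-the}: set $\beta-\beta_0=\mu^2\Delta$, substitute $\psi=\beta t+\psi_1$ in (\ref{m1})--(\ref{m2}), and average over the fast phase $\beta t$, arriving at a system of the form (\ref{mm1})--(\ref{mm2}). Since $S_{13}\equiv S_{23}\equiv0$, the averaging generates no $\varepsilon^3\mu$-scale terms, so $\tilde S_{13}\equiv\tilde S_{23}\equiv0$ as well, and the reduced system on the manifold carries only the perturbation scales $\mu^4$ and $\varepsilon^2\mu^2$.

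Next, for each of the $2\tilde N$ nondegenerate roots $\vartheta_k^0$ of (\ref{alg}) I would localize $\psi_2=\vartheta_k^0+\psi_3$ and pass to the autonomous system on $\mathbb{T}_4$ exactly as in Lemma~\ref{lemma4}, using the fast phases $\zeta_1=\beta t$, $\zeta_2=\alpha t$, the rescaled time $\tau=\mu^2 t$, and the parameters $\eta_1=\mu^2$, $\eta_2=\varepsilon^2$, $\chi=1/\mu^2$; the difference is that the parameter $\eta_3=\varepsilon^3/\mu$ no longer appears. The linear part in $\psi_3$ is $G'(\vartheta_k^0)\psi_3$, hyperbolic with spectral gap bounded away from zero uniformly in the parameters, and now only $\eta_1,\eta_2$ have to be small; this is achieved by $\mu\le\mu_0$ and $\varepsilon\le c_0$, with no constraint of the form $\varepsilon\le c_0\mu^{1/3}$, which is exactly the enlarged parameter range asserted. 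Applying the fiber contraction theorem to the integral operator built from the variation-of-constants formula for the hyperbolic $\psi_3$-equation, precisely as in the proofs of Lemmas~\ref{lemma2} and~\ref{lemma4}, I obtain a unique fixed point $w_k=\eta_1 v_{k1}(\zeta_1,\lambda)+\eta_2 v_{k2}(\varphi_2,\zeta_1,\zeta_2,\lambda)$, bounded uniformly in $\lambda$, in $\chi\ge\chi_0$ and in $\alpha\ge\alpha_0$. The improvement from $C^{l-3}$ to $C^{l-2}$ comes from the fact that in the non-degenerate case ($g_1\not\equiv0$, equivalently $r_2\equiv0$) only the single averaging transformation (\ref{change1}) has been applied, so the right-hand sides of (\ref{m1})--(\ref{m2}) and of the localized system are $C^{l-2}$; carrying this one extra order through the fiber-contraction/induction scheme yields $C^{l-2}$ manifolds. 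Undoing the rescalings gives $\psi_2=\vartheta_k^0+\mu^2 v_{j0}(\beta t,\mu)+\varepsilon^2 v_{j1}(\varphi,\beta t,\alpha t,\mu,\varepsilon)$, and pulling back through the $\beta t$-averaging change of variables and $\psi=\beta t+\psi_1$, as in Corollary~\ref{cor6}, produces the manifolds $\mathcal{R}_j$ of the form (\ref{ma33}) with the stated expansion.

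Finally, the exponential stability of $\mathcal{R}_{2k}$ in the sense of (\ref{sta}) and the exponential instability of $\mathcal{R}_{2k-1}$ in the sense of (\ref{sta2-}) follow from the sign alternation of $G'(\vartheta_j^0)$ along the roots together with the uniform hyperbolicity estimate, by the same dichotomy argument as in Lemma~6.1 of \cite{Recke2011}. The main point requiring care is the bookkeeping that tracks the one-order smoothness gain together with the uniformity of all constants with respect to $\alpha$; the latter is handled by the same $\chi=1/\mu^2\ge\chi_0$, $\alpha\ge\alpha_0$ device already used in Lemma~\ref{lemma4}.
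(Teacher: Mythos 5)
Your proposal follows essentially the same route as the paper: the paper proves Corollary \ref{cor66} by the observation (made just before its statement) that in the non-degenerate case $R_{04}\equiv R_{14}\equiv R_{24}\equiv 0$, hence $S_{13}\equiv S_{23}\equiv 0$ and $\tilde S_{13}\equiv\tilde S_{23}\equiv 0$, so the fiber-contraction argument of Lemma \ref{lemma4} runs with the parameter $\eta_3=\varepsilon^{3}/\mu$ absent, which removes the constraint $\varepsilon\le c_0\mu^{1/3}$ (needed only to keep $\eta_3$ small) and drops the $v_{j2}$-term, while the stability/instability statements are inherited unchanged from (\ref{sta}) and (\ref{sta2-}). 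Your accounting of the one-order smoothness gain (only the single averaging (\ref{change1}) is used, so the reduced right-hand sides are one degree smoother) matches the paper's intent, so the proposal is correct.
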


\section{\label{sec:Proofs-o-theorems}Proofs of theorems}

We start by proving the degenerate case, since the non-degenerate
case will be a particular case of this proof.

\subsection{Degenerate case $g_{1}=0$}

Let us prove Theorem \ref{theorem011}. In the case $g_{1}=0$, two
averaging transformations (\ref{change1}) and (\ref{change2}) reduce
system (\ref{01}) into (\ref{av010}) with $\gamma^{2}/\alpha=\mu^{2},$
$1/\alpha=\varepsilon^{2}$, and $\bar{g}=g_{2}.$ The obtained system
(\ref{av010}) is further transformed using the local coordinates
to (\ref{ss1}) -- (\ref{ss3}). In the latter system, for any fixed $\beta$ there exists
an orbitally asymptotically stable integral manifold (\ref{mani})
accordingly to Lemma \ref{lemma2}.
Taking into account regular dependence of the right-hand side of
system (\ref{ss1}) -- (\ref{ss3}) on $\beta\in [\beta_1,\beta_2]$ we conclude
that
 constants $\mu_0$, $\varepsilon_0$, $M_1$, $\kappa$, and $L$ from
Lemma \ref{lemma2} can be chosen common  for all $\beta\in [\beta_1,\beta_2]$.
  Hence,
  since dependence (\ref{change2p}),
  in the original system
(\ref{01}) there exists the exponentially stable integral manifold
\begin{eqnarray} \label{mno}
x = e^{A\varphi}(x_0(\psi) + \Phi_0(\psi)u) + \frac{\gamma}{\alpha}\tilde u_2(x_0(\psi) +
\Phi_0(\psi)u, \beta t,\alpha t, \frac{\gamma^2}{\alpha}, \frac{1}{\alpha}),
\end{eqnarray}
where $u$ is defined by (\ref{mani00}). Taking
into account the properties of functions $\tilde u_2$ and $u$, we conclude that
this manifold has form
 (\ref{eq:IM0-deg}).
The equations on the manifold is obtained by substitution (\ref{mani})
into (\ref{ss1}) -- (\ref{ss3}) and are given by (\ref{m1}) --
(\ref{m2}). Substituting the parameters $\mu=\gamma/\sqrt{\alpha}$
and $\varepsilon=1/\sqrt{\alpha}$, one obtains the system on the
manifold (\ref{eq:psi}) -- (\ref{eq:theta}).

All solutions from some neighborhood of the torus $\mathcal{T}_{2}$
are approaching $\mathfrak{M}_{2}(\alpha,\beta,\gamma)$. Therefore,
in order to show the remaining statements of Theorem \ref{theorem011},
it is enough to consider the behavior of solutions on the manifold
$\mathfrak{M}_{2}(\alpha,\beta,\gamma)$. Note that the manifold $\mathfrak{M}_{2}(\alpha,\beta,\gamma)$
corresponds to the manifold $\mathfrak{M}_{\mu,\varepsilon}$ in the
local coordinate system $(h,\psi,\varphi)$ with parameters $\mu$
and $\varepsilon$.

We first consider system (\ref{m1}) -- (\ref{m2}) with parameters
$\mu$ and $\varepsilon$. Let us fix any positive $\epsilon$. For
the set $S_{2}$ of singular values of $G_{2}$ we define the following
sets
\[
\mathcal{B}_{2}(\epsilon)=\{g\in[G_{2}^{-},G_{2}^{+}]:\ \mathrm{dist}(g,S_{2})\ge\epsilon\},
\]
 and
\[
\mathcal{A}_{2}(\epsilon)=\{\theta\in[0,2\pi]:\ G_{2}(\theta)\in\mathcal{B}_{2}(\epsilon)\}.
\]
 Taking into account that the sets $\mathcal{B}_{2}(\epsilon)$ and
$\mathcal{A}_{2}(\epsilon)$ are compact, there exists a positive
constant $\varsigma$ such that
\begin{equation}
\left|\frac{dG_{2}(\theta)}{d\theta}\right|\ge\varsigma\quad\mbox{for all}\quad\theta\in\mathcal{A}_{2}(\epsilon).\label{estpro}
\end{equation}

By Corollary \ref{cor6}, for fixed $\Delta\in\mathcal{B}_{2}(\epsilon)$,
there exist $\mu_{0}>0$ and $c_{0}>0$ such that for all $0<\mu\le\mu_{0}$
and $\varepsilon\le c_{0}\mu^{1/3}$ system (\ref{m1}) -- (\ref{m2})
has $2\tilde{N}$ integral manifolds $\mathcal{P}_{j}$. Since the
estimate (\ref{estpro}) is uniform, constants $\mu_{0}>0$ and $c_{0}>0$
can be chosen common for all $\Delta\in\mathcal{B}(\epsilon),$ and
therefore for all $\alpha,\gamma$, and $\beta$ satisfying  (\ref{cond33}). Hence, under conditions (\ref{eq:cond00}) and
(\ref{cond33}), there exist $2\tilde{N}$ integral
submanifolds $\mathfrak{L}_{j}\subset\mathfrak{M}_{2},$ $j=1,...,2\tilde{N}$
of the form (\ref{xm-deg}) for system (\ref{01}). Locally, the manifolds
corresponding to $j=2k,$ $k=1,\dots,\tilde{N}$ are exponentially
stable and the others are exponentially unstable.

In order to complete the proof, let us show, that any solution from
the manifold $\mathfrak{M}_{2}(\alpha,\beta,\gamma)$ is attracted
to one of the manifolds $\mathfrak{L}_{2k}$ as $t\to\infty$. For
this, it is more convenient to consider the reduced system on the
manifold (\ref{mm1}) -- (\ref{mm2}) and the equivalent to $\mathfrak{L}_{j}$
integral manifolds $\Pi_{j}$ (\ref{ma}).

All manifolds $\Pi_{2k}$, $k=1,\dots,\tilde{N}$, are asymptotically
stable with the exponential estimate (\ref{sta}) and manifolds $\Pi_{2k-1}$
are asymptotically unstable with the estimate (\ref{sta2-}). Therefore,
by (\ref{sta2-}), on a finite time interval $T$, the solution $(\psi_{2}(t),\varphi_{2}(t))$
of (\ref{mm1}) -- (\ref{mm2}) with initial values $(\psi_{2}(t_{0}),\varphi_{2}(t_{0}))$
not belonging to the manifold $\Pi_{2k-1}$, i.e.
\[
\psi_{2}(t_{0})\neq\vartheta_{2k-1}^{0}+v_{2k-1}(\varphi_{2}(t_{0}),\beta t_{0},\alpha t_{0},\mu,\varepsilon),
\]
 and $|\psi_{2}(t_{0})-\vartheta_{2k-1}^{0}|<\delta_{0}$, reaches
the boundary of $\delta_{0}$-neighborhood of unperturbed manifold
$\vartheta_{2k-1}^{0}\times\mathbb{T}_{1}$, more exactly, the value
of $\psi_{2}(t)$ reaches $\vartheta_{2k-1}^{0}-\delta_{0}$ or $\vartheta_{2k-1}^{0}+\delta_{0},$
where $\delta_{0}$ is defined from Lemma \ref{lemma4}. The time
interval $T$ depends on the initial values $(\psi_{2}(t_{0}),\varphi_{2}(t_{0}))$
and parameters $\mu,\varepsilon$.

Further, any solution $(\psi_{2}(t),\varphi_{2}(t))$ starting outside
of a $\delta_{0}$-neighborhood of $\vartheta_{2k-1}^{0}\times\mathbb{T}_{1}$
reaches $\delta_{0}$-neighborhood of either the manifold $\vartheta_{2k}^{0}\times\mathbb{T}_{1}$
or $\vartheta_{2k-2}^{0}\times\mathbb{T}_{1}$ on a finite time interval.
This follows from the fact, that the right hand side of (\ref{mm1})
is uniformly bounded from zero on this set for small enough parameters
$\varepsilon$ and $\mu$ (see also Lemma 6.2, \cite{Recke2011}).

Next, by the inequality (\ref{sta}) of Lemma \ref{lemma4}, any solution
from the $\delta_{0}$-neighborhood of the manifold $\vartheta_{2k}^{0}\times\mathbb{T}_{1}$
is attracted to the stable integral manifold $\Pi_{2k}.$

As a result, solutions $(\psi(t),\varphi(t))$ of system (\ref{m1})
-- (\ref{m2}) that, at initial point $t=t_{0}$ do not belong to
unstable integral manifolds $\mathcal{P}_{2k-1},$ $k=1,...,N,$ are
attracted for $t\ge t_{0}$ to solutions $(\bar{\psi}(t),\bar{\varphi}(t))$
on one of the stable integral manifolds $\mathcal{P}_{2k}$, i.e.
\[
\bar{\psi}(t)=\beta t+\vartheta_{2k}^{0}+\tilde{v}_{2k}(\bar{\varphi}(t),\beta t,\alpha t,\mu,\varepsilon),
\]
 and $\bar{\varphi}(t)$ satisfies (\ref{m2}) with $\psi(t)=\bar{\psi}(t)$
and $j=2k$
\begin{multline}
\frac{d\varphi}{dt}=\alpha_{0}+\mu^{2}S_{21}(\beta t+\vartheta_{j}^{0}+\tilde{v}_{j},\beta t,\mu)+\varepsilon^{2}\mu^{2}S_{22}(\beta t+\vartheta_{j}^{0}+\tilde{v}_{j},\varphi,\beta t,\alpha t,\mu,\varepsilon)\\
+\varepsilon^{3}\mu S_{23}(\beta t+\vartheta_{j}^{0}+\tilde{v}_{j},\varphi,\beta t,\alpha t,\mu,\varepsilon).\label{ma3na}
\end{multline}

If a solution $(\psi(t),\varphi(t))$ of (\ref{m1}) -- (\ref{m2})
at initial point $t=t_{0}$ belongs to one of the unstable integral
manifolds $\mathcal{P}_{2k+1}$ then this solution has the following
form
\[
\psi(t)=\beta t+\vartheta_{2k+1}^{0}+\tilde{v}_{2k+1}(\varphi(t),\beta t,\alpha t,\mu,\varepsilon),
\]
 where $\varphi(t)$ satisfies (\ref{ma3na}) for $j=2k+1$.

Lemma \ref{lemma2} implies, that any solution $(h(t),\psi(t),\varphi(t))$
of (\ref{ss1}) -- (\ref{ss3}) with $\|h(t_{0})\|\le\nu_{0}$ is
attracted to one of the solutions $(\bar{h}(t),\bar{\psi}(t),\bar{\varphi}(t))$
on the integral manifold $\mathfrak{M}_{\mu,\varepsilon}$ such that
\begin{eqnarray*}
 &  & \bar{h}(t)=u(\bar{\psi}(t),\bar{\varphi}(t),\beta t,\alpha t,\mu,\varepsilon),\\[1mm]
 &  & \bar{\psi}(t)=\beta t+\vartheta_{j}^{0}+\tilde{v}_{j}(\bar{\varphi}(t),\beta t,\alpha t,\mu,\varepsilon),\\[1mm]
 &  & \bar{\varphi}(t)\mbox{\,\,\ is a solution of system (\ref{ma3na})}
\end{eqnarray*}
 with some $j,\ 1\le j\le2\tilde{N}.$
Therefore, every solution $x_2(t)$
of averaged system (\ref{av2}) that at a certain moment of time $t_{0}$ belong
to a small neighborhood of invariant torus  $\mathcal{T}_{2}$ tends to one
of the solutions
\[
\bar{x}_2(t)=e^{A\bar{\varphi}(t)}\left(x_{0}(\beta t+\vartheta_{j}^{0}+\tilde{v}_{j})+\Phi_{0}(\beta t+\vartheta_{j}^{0}+\tilde{v}_{j})\bar{h}(t)\right),\quad j=1,\dots,2\tilde{N}.
\]
Taking into account averaging transformation (\ref{change2p}),
and the form (\ref{vj}) of $\bar{v}_j$,
every solution $x(t)$ of system (\ref{01}) that at a certain moment of time
$t_{0}$ belongs
to $\delta$-neighborhood of torus $\mathcal{T}_{2}$ tends  as $t\to+\infty$
to one
of the manifolds $\mathfrak{L}_{j}$ given by (\ref{xm-deg}).

The proof of theorem \ref{thm:main2-1}
follows from (\ref{xm-deg}) taking into
account smallness of $\gamma/\alpha$, $\gamma^{2}/\alpha$, and $\frac{1}{\gamma\alpha}$ for
$\alpha,\beta$, and $\gamma$ satisfying (\ref{cond33}).

\subsection{Non-degenerate case}

If $g_{1}\not\equiv0,$ the averaging transformation (\ref{change1})
in (\ref{01}) leads to system (\ref{av010}) with $\gamma=\mu^{2},$
$1/\alpha=\varepsilon^{2}$, and $\bar{g}=g_{1}$, $r_{2}\equiv0$.
The existence of the asymptotically stable invariant manifold $\mathfrak{M}_{1}(\alpha,\beta,\gamma)$
of the form (\ref{eq:IM0}) follows directly from Lemma \ref{lemma2}
and the coordinate transformation (\ref{eq:zamina}).

For the proof of the other statements of Theorem \ref{theorem01}
we consider the dynamics on the manifold $\mathfrak{M}_{1}(\alpha,\beta,\gamma).$
For any $\alpha,\gamma$ and $\beta$ satisfying 
(\ref{cond2}) there exists an even number of points $\vartheta_{j}^{0},$
$j=1,...,2\tilde{N},$ which are solutions of the equation $\beta-\beta_{0}=\gamma G_{1}(\theta)$.
The number $\tilde{N}$ depends on the parameters $\alpha,\gamma$
and $\beta$.

By Corollary \ref{cor66}, for all $\mu\in(0,\mu_{0}]$ and $\varepsilon\le c_{0}$
with sufficiently small $\mu_{0}$ and $c_{0}$ system (\ref{m1})
-- (\ref{m2}) has $2\tilde{N}$ integral manifolds $\mathcal{R}_{j}$
corresponding to points $\vartheta_{j}^{0}$. Analogously to the proof
of Theorem \ref{theorem011} in the degenerate case, we show that
every solution $(\psi(t),\varphi(t))$ of system (\ref{m1}) -- (\ref{m2})
is attracted for $t\to+\infty$ to some solution $(\bar{\psi}(t),\bar{\varphi}(t))$
on one of the integral manifolds $\mathcal{R}_{j},$ $j=1,...,2\tilde{N}.$

The manifolds $\mathcal{R}_{j}$ in system (\ref{m1}) -- (\ref{m2})
correspond to the manifolds $\mathfrak{N}_{j}$ in system (\ref{01}).
Therefore, every solution $x(t)$ of system (\ref{01}) that at a
certain moment of time $t_{0}$ belong to $\delta$-neighborhood of
torus $\mathcal{T}_{2}$ tends to one of the manifolds $\mathfrak{N}_{j}$
as $t\to+\infty$ and satisfies estimate (\ref{theo3}) from Theorem
\ref{thm:main}.

\section{Example 1\label{sec:example1}}

In this section, we illustrate the obtained results using a model
of a laser with saturable absorber with an external electro-optical
forcing. The model has the form (\ref{01}) with $x\in\mathbb{R}^{4}$,
\begin{equation}
f(x)=\left(\begin{array}{c}
\mu\left(a-x_{1}-x_{1}\left(x_{3}^{2}+x_{4}^{2}\right)\right)\\
\mu\left(b-x_{2}-cx_{2}\left(x_{3}^{2}+x_{4}^{2}\right)\right)\\
\frac{1}{2}\left(x_{1}-x_{2}-1\right)x_{3}-\alpha_{0}x_{4}\\
\frac{1}{2}\left(x_{1}-x_{2}-1\right)x_{4}+\alpha_{0}x_{3}
\end{array}\right),\label{eq:f}
\end{equation}
where $\mu,$ $a$, $b$, $c$, and $\alpha_{0}$ are real  parameters.
This system satisfies the symmetry condition (\ref{eq:symf}) with matrix
\begin{equation}
A=\left(\begin{array}{cccc}
0 & 0 & 0 & 0\\
0 & 0 & 0 & 0\\
0 & 0 & 0 & -1\\
0 & 0 & 1 & 0
\end{array}\right).\label{eq:A}
\end{equation}
 Note, that in the unperturbed case, the model can be written in partially complex
form
\begin{eqnarray}
x_{1}' & = & \mu\left(a-x_{1}-x_{1}\left|y\right|^{2}\right),\label{eq:yam1}\\
x_{2}' & = & \mu\left(b-x_{2}-cx_{2}\left|y\right|^{2}\right),\label{eq:yam2}\\
y' & = & \frac{1}{2}\left(x_{1}-x_{2}-1\right)y+i\alpha_{0}y,\label{eq:yam3}
\end{eqnarray}
where $y=x_{3}+ix_{4}$, $i=\sqrt{-1}$, which can be further reduced
to the Yamada model describing the dynamics of the intensity of a
laser with saturable absorber \cite{Yamada1993,Dubbeldam1999}. In
this case, $y$ plays the role of the electric field with intensity
$I=|y|^{2}=x_3^2+x_4^2$, and $x_{1}$, $x_{2}$ are gain and absorption, respectively.
In \cite{Dubbeldam1999}, the parameter regions are numerically obtained,
for which the
Yamada model (in coordinates $(x_1,x_2,I)$) has an asymptotically stable periodic solution
$\left(x_{1}^{0}(\beta_0 t),x_{2}^{0}(\beta_0 t),I^{0}(\beta_0  t)\right)^{T}$, $I^{0}(\beta_0 t)>0$.
We choose the parameter values $a=7.0$, $b=5.8$, $c=1.8$, and $\mu=0.04$
belonging to this region. For these parameters, $\beta_0=0.071$ and
the corresponding complexified system (\ref{eq:yam1})
-- (\ref{eq:yam3}) has the quasiperiodic solution
$x(t)=\left(x_{1}^{0}(\beta_{0}t),x_{2}^{0}(\beta_{0}t),\sqrt{I^{0}(\beta_{0}t)}
e^{i\left(\alpha_{0}t+\xi\right)}\right)^{T}$,
where  $\xi$ is an arbitrary real constant resulting from the symmetry.
Note that the $S^{1}$- equivariance of the right-hand side of (\ref{eq:yam1})
-- (\ref{eq:yam3}) with respect to the transformation $y\to e^{i\xi}y$
is equivalent to the $S^{1}$-equivariance (\ref{eq:symf}) of the function (\ref{eq:f})
with matrix $A$ from (\ref{eq:A}). The corresponding
solution (\ref{qp}) of (\ref{02}) with $f$ defined by (\ref{eq:f}) reads then
\[
x(t)=\left(x_{1}^{0}(\beta_{0}t),x_{2}^{0}(\beta_{0}t),\sqrt{I^{0}(\beta_{0}t)}\cos\left(\alpha_{0}t+\xi\right),\sqrt{I^{0}(\beta_{0}t)}\sin\left(\alpha_{0}t+\xi\right)\right)^{T}.
\]

In this way, using numerical results from \cite{Dubbeldam1999}, we see
that the unperturbed system (\ref{02}) with (\ref{eq:f}) has an exponentially
attracting invariant torus (\ref{eq:torus}) with $A$ from (\ref{eq:A}) and
\begin{equation}
\label{x0yam}
x_{0}(\psi)=\left(x_{1}^{0}(\psi),x_{2}^{0}(\psi),x_{3}^{0}(\psi)=\sqrt{I^{0}(\psi)},0\right)^{T}.
\end{equation}
Here $x_{0}(\psi)$ is the periodic solution of the system
(\ref{newcoord}) transformed to the corotating coordinates.
It can be found numerically using direct integration. Figure
\ref{fig:x0} illustrates $x_{0}(\psi)$ for the chosen parameter
values. It has shape of a pulse typical for lasers with saturable
absorber.

\begin{figure}
\begin{centering}
\bigskip
\smallskip
\includegraphics[angle=-90,width=0.8\linewidth]{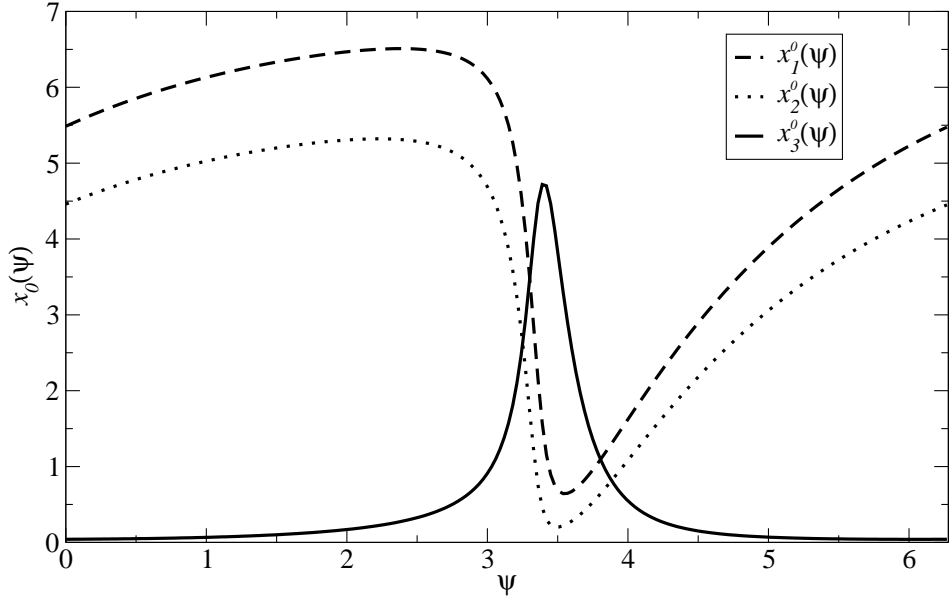}
\par\end{centering}
\caption{\label{fig:x0} Exponentially stable periodic solution $x_{0}(\psi)$
of the unperturbed system (\ref{newcoord}) with the right hand side (\ref{eq:f}).
The components $x_{1}^{0}$, $x_{2}^{0}$, and $x_{3}^{0}$ are shown,
while $x_{4}^{0}(\psi)\equiv0$. }
\end{figure}

Accordingly to Theorems \ref{thm:main} and \ref{thm:main2-1},
the synchronization region for
the parameters $\alpha,\beta,$ and $\gamma$ is determined by the
properties of function $G_{1}(\psi)$ from (\ref{0per3}) in the nondegenerate
case and $G_{2}(\psi)$ from (\ref{01per3}) in the degenerate case. In both
cases, one needs the periodic solution $p_{1}\left(\psi\right)$ of
the adjoint variational equation (\ref{0per2}),
which satisfies the orthonormality
condition (\ref{24new}). Using standard numerical methods,
which involve computation of the monodromy matrix, we found $p_{1}\left(\psi\right)$
numerically. Because of (\ref{24new}), $p_{1}$ is
defined uniquely. The result is shown in Figure \ref{fig:P1}.

\begin{figure}

\begin{centering}
\bigskip
\smallskip
\includegraphics[angle=-90,width=0.8\linewidth]{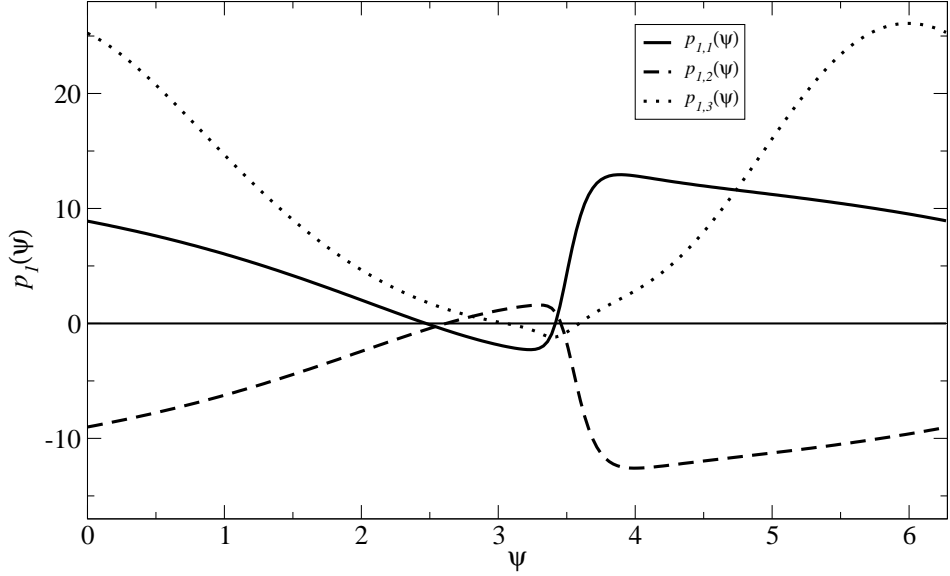}
\par\end{centering}

\caption{\label{fig:P1}Periodic solution $p_{1}\left(\psi\right)$ of the
adjoint variational system (\ref{0per2}) for unperturbed Yamada model. The
components $p_{1,1}$, $p_{1,2}$, and $p_{1,3}$ are shown, while
$p_{1,4}(\psi)\equiv0$. }

\end{figure}

With the given $p_{1}\left(\psi\right)$, the effects of arbitrary
perturbation of the form (\ref{01}) -- (\ref{eq:symg}) can be studied.
For example, let us consider the perturbation
\begin{equation}
\gamma g\left(x,\beta t,\alpha t\right)=\gamma\left(g_{\mathrm{el}}\left(\beta t\right),0,\cos\left(\alpha t\right)g_{\mathrm{op}}\left(\beta t\right),\sin\left(\alpha t\right)g_{\mathrm{op}}\left(\beta t\right)\right)^{T}.\label{eq:pert}
\end{equation}
This kind of perturbation may correspond to some electro-optical external
injection, where $\gamma g_{\mathrm{el}}\left(\alpha t\right)$ stands
for an electric and $\gamma\cos\left(\alpha t\right)g_{\mathrm{op}}\left(\beta t\right)$,
$\gamma\sin\left(\alpha t\right)g_{\mathrm{op}}\left(\beta t\right)$
for optical components, respectively. Here, we consider (\ref{eq:pert})
just as an illustrative example for our method.

The function $g_{1}(x,\psi)$ from (\ref{av}) is then reduced to
\[
g_{1}\left(x,\psi\right)=[g_{\mathrm{el}}(\psi),0,0,0]^{T}
\]
and
\[
G_{1}\left(\psi\right)=\frac{1}{2\pi}\int_{0}^{2\pi}g_{\mathrm{el}}(\theta - \psi)p_{1,1}\left(\theta\right)d\theta.
\]
Accordingly to Theorems \ref{theorem01} and \ref{thm:main},
the locking regions have the
form as in Fig. \ref{fig:generic}
and the corresponding angles are determined by
the extrema of $G_{1}\left(\psi\right)$. Let us consider, for example,
$g_{\mathrm{el}}\left(\psi\right)=\sin\left(\psi\right)$. Then the
function $G_{1}(\psi)$ has the form
\[
G_{1}\left(\psi\right)=G_{s}\sin\psi+G_{c}\cos\psi
\]
with
\[
G_{c}=-\frac{1}{2\pi}\int_{0}^{2\pi}p_{1,1}\left(\theta\right)\cos\theta d\theta,\quad G_{s}=\frac{1}{2\pi}\int_{0}^{2\pi}p_{1,1}\left(\theta\right)\sin\theta d\theta.
\]
For the chosen parameter values (using $p_{1}(\psi)$ given in Fig.~\ref{fig:P1}),
we have $G_{c}\approx1.67$ and $G_{s}\approx-2.33.$ Hence, $G_{1}\left(\psi\right)$
is a harmonic function with extrema $G_{1}^{+}=\sqrt{G_{s}^{2}+G_{c}^{2}}\approx2.86$
and $G_{1}^{-}=-G_{1}^{+}$, see Fig.~\ref{fig:G1}(a). The synchronization
domain is symmetric and given by the conditions $\alpha>\alpha_{1},$
$0<\gamma<\gamma_{1}$, and $|\beta-\beta_{0}|<\gamma G_{1}^{+}\approx2.86\gamma$.
The bounds $\gamma_{1}$ and $\alpha_{1}$ cannot be determined using our approach.
If parameters belongs to synchronization domain, then for any solution $x(t)$ of system (\ref{01}) with right-hand side defined by
(\ref{eq:f}) and (\ref{eq:pert}) which is at a certain moment close to invariant torus
(\ref{eq:torus}) of unperturbed system with $x_0$ from (\ref{x0yam}),
there exists $\sigma \in \mathbb{R}$
such that
$$|x_1(t) - x_1^0(\beta t + \sigma)| + |x_2(t) - x_2^0(\beta t + \sigma)| +
|x_3^2(t) + x_4^2(t) -I^0(\beta t + \sigma)| \approx 0$$
for large $t.$

Interestingly, when the electric perturbation has two
maxima, e.g. $g_{\mathrm{el}}(\psi)=0.5\cos\psi+\sin(2\psi)$, then
$G_{1}\left(\psi\right)$ can also have two maxima (see Fig.~\ref{fig:G1}(b))
which leads to the appearance of multiple stable synchronized manifolds
(Theorem \ref{theorem01}).
In this case, the set of singular values is
$S_{1}=\left\{ G_{1}^{+}=1.87,1.22,0.01,-2.77=G_{1}^{-}\right\} $
and the synchronization domain is given by (\ref{cond2}).

\begin{figure}
\begin{centering}
\bigskip
\smallskip
\includegraphics[angle=-90,width=0.8\linewidth]{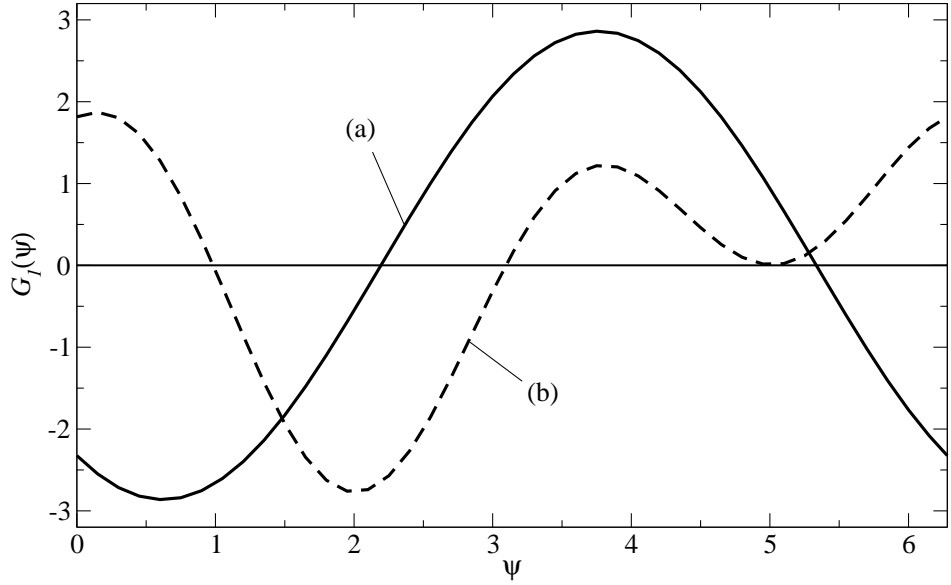}
\par\end{centering}

\caption{\label{fig:G1}Function $G_{1}\left(\psi\right)$ determining the
locking regions for the Yamada model with perturbation (\ref{eq:pert}).
(a) $g_{\mathrm{el}}=\sin\left(\psi\right)$, (b) $g_{\mathrm{el}}=0.5\cos\psi+\sin\left(2\psi\right).$}
\end{figure}

In a similar way, the influence of other perturbations of the form
(\ref{01}) -- (\ref{eq:symg}) can be studied.

\section{Example 2\label{sec:example2}}
In the next example, the system has quasiperiodic solution of the form
(\ref{qp}),
which can be written in simple analytical form. We consider system
(\ref{01}) with $x = (x_1,x_2,x_3,x_4) \in \mathbb{R}^4$ and
\begin{eqnarray*}
& & f(x) = \left(
           \begin{array}{c}
             \beta_0 x_2 + x_1(x_3^2 + x_4^2 - x_1^2 - x_2^2) \\
             -\beta_0 x_1 + x_2(x_3^2 + x_4^2 - x_1^2 - x_2^2) \\
(1 - x_1^2 - x_2^2)x_3 + ( \beta_0 x_1 +  \beta_0 x_2 + \alpha_0)x_4 \\
-( \beta_0 x_1 +  \beta_0 x_2 + \alpha_0)x_3 + (1 - x_1^2 - x_2^2)x_4 \\
\end{array} \right), \\
& & g(x,\psi, \varphi) =
\left( \begin{array}{c} x_3\cos\varphi + x_4\sin\varphi \\ -x_3\sin\varphi + x_4\cos\varphi \\
 (x_3^2+x_4^2)\sin\psi \cos\varphi \\
(x_3^2+x_4^2)\sin\psi \sin\varphi \\
\end{array} \right).
\end{eqnarray*}
This system satisfies symmetry conditions (\ref{eq:symf}) and (\ref{eq:symg}) with matrix $A$ defined by (\ref{eq:A}).
Unperturbed system (if $\gamma = 0$) has the solution
 \begin{eqnarray*} x(t) =  e^{A\alpha_0 t} x_0(\beta_0 t) = e^{A\alpha_0 t}
\left( \begin{array}{c} \sin(\beta_0 t) \\
\cos(\beta_0 t) \\
\sin\left(\sin(\beta_0 t) - \cos(\beta_0 t)\right) \\
\cos\left(\sin(\beta_0 t) - \cos(\beta_0 t)\right) \\
\end{array} \right)
= e^{A\alpha_0 t}
\left( \begin{array}{c} x_{1}^0(\beta_0 t) \\
x_{2}^0(\beta_0 t) \\
x_{3}^0(\beta_0 t) \\
x_{4}^0(\beta_0 t) \\
\end{array} \right).
\end{eqnarray*}
The corresponding variational equation (\ref{0per1})
 has two linear independent periodic solution
$$q_1(\psi) = \frac{dx_0(\psi)}{d\psi}, \quad q_2(\psi) = A x_0(\psi).$$
The adjoint equation
(\ref{0per2}) has two periodic solution $p_1(\psi)$ and $p_2(\psi)$ such that $p_j^T(\psi)q_k(\psi) = \delta_{jk}$ for all $\psi$ and $j,k = 1,2.$ It can be verified that $p_1(\psi) = (\cos\psi, -\sin\psi, 0, 0).$ By direct computation we find that $g_1\equiv 0$ and the second averaging function
$$
g_2(x_0(\psi),\psi) =
(0, -\sin\psi,
-x_{4}^0(\psi)\sin^2\psi, x_{3}^0(\psi)\sin^2\psi)
$$ and
\begin{eqnarray*} & & G_2(\psi) = \frac{1}{2\pi}\int_0^{2\pi}p_1^T(\psi + \theta)g_2(x_0(\psi + \theta),\theta)d\theta = \\ & & = \frac{1}{2\pi}\int_0^{2\pi} \sin(\psi + \theta)\sin\theta d\theta =  \frac{1}{2} \cos\psi. \end{eqnarray*}
 Therefore, $G_2^+ = -G_2^- = 1/2$ and by Theorem 2.3 the synchronization domain (where condition (\ref{theo6}) is fulfilled) is defined by $|\beta - \beta_0| < \gamma^2 G_2^+/\alpha = \gamma^2 / 2\alpha$ for $\alpha > \alpha_1$ and $c_1/ \alpha \le \gamma \le c_2 \sqrt{\alpha}$ with some
positive constants $\alpha_1$, $c_1$, and $c_2$.

 \bibliographystyle{siam}

 \end{document}